
\documentclass[reqno,a4paper]{amsart}
\usepackage{amssymb}
\usepackage{amsmath}
\setlength{\topmargin}{0 pt} 
\setlength{\textwidth}{12.5cm}
\setlength{\parindent}{0.8cm}
\frenchspacing
\newcommand{\angles}[1]{\langle #1 \rangle}
\newcommand{\half}{\frac{1}{2}}

\begin{document} 
\newtheorem{prop}{Proposition}[section]
\newtheorem{Def}{Definition}[section] \newtheorem{theorem}{Theorem}[section]
\newtheorem{lemma}{Lemma}[section] \newtheorem{Cor}{Corollary}[section]

\title[Maxwell-Klein-Gordon in Lorenz gauge]{\bf Low regularity local well-posedness for the (N+1)-dimensional Maxwell-Klein-Gordon equations in Lorenz gauge}
\author[Hartmut Pecher]{
{\bf Hartmut Pecher}\\
Fachbereich Mathematik und Naturwissenschaften\\
Bergische Universit\"at Wuppertal\\
Gau{\ss}str.  20\\
42097 Wuppertal\\
Germany\\
e-mail {\tt pecher@math.uni-wuppertal.de}}
\date{}

\begin{abstract}
The Cauchy problem for the Maxwell-Klein-Gordon equations in Lorenz gauge in $n$ space dimensions ($n \ge 2$) is locally well-posed for low regularity data, in two and three space dimensions even for data without finite energy. The result relies on the null structure for the main bilinear terms which was shown to be not only present in Coulomb gauge but also in Lorenz gauge by Selberg and Tesfahun, who proved global well-posedness for finite energy data in three space dimensions. This null structure is combined with product estimates for wave-Sobolev spaces given systematically by d'Ancona, Foschi and Selberg.
\end{abstract}
\maketitle
\renewcommand{\thefootnote}{\fnsymbol{footnote}}
\footnotetext{\hspace{-1.5em}{\it 2000 Mathematics Subject Classification:} 
35Q61, 35L70 \\
{\it Key words and phrases:} Maxwell-Klein-Gordon,  
local well-posedness, Fourier restriction norm method}
\normalsize 
\setcounter{section}{0}
\section{Introduction and main results}
\noindent Consider the Maxwell-Klein-Gordon system 
\begin{align}
\label{1}
\partial^{\nu} F_{\mu \nu} & =  j_{\mu} \\
\label{2}
D^{(A)}_{\mu} D^{(A)\mu} \phi & = m^2 \phi \, ,
\end{align}
where $m>0$ is a constant and
\begin{align}
\label{3}
F_{\mu \nu} & := \partial_{\mu} A_{\nu} - \partial_{\nu} A_{\mu} \\
\label{4}
D^{(A)}_{\mu} \phi & := \partial_{\mu} - iA_{\mu} \phi \\
\label{5}
j_{\mu} & := Im(\phi \overline{D^{(A)}_{\mu} \phi}) = Im(\phi \overline{\partial_{\mu} \phi}) + |\phi|^2 A_{\mu} \, .
\end{align}
Here $F_{\mu \nu} : {\mathbb R}^{n+1} \to {\mathbb R}$ denotes the electromagnetic field, $\phi : {\mathbb R}^{n+1} \to {\mathbb C}$ a scalar field and $A_{\nu} : {\mathbb R}^{n+1} \to {\mathbb R}$ the potential. We use the notation $\partial_{\mu} = \frac{\partial}{\partial x_{\mu}}$, where we write $(x^0,x^1,...,x^n) = (t,x^1,...,x^n)$ and also $\partial_0 = \partial_t$ and $\nabla = (\partial_1,...,\partial_n)$. Roman indices run over $1,...,n$ and greek indices over $0,...,n$ and repeated upper/lower indices are summed. Indices are raised and lowered using the Minkowski metric $diag(-1,1,...,1)$.

The Maxwell-Klein-Gordon system describes the motion of a spin 0 particle with mass $m$ self-interacting with an electromagnetic field.

We are interested in the Cauchy problem with data
$\phi(x,0) = \phi_0(x)$ , $\partial_t \phi(x,0) $ $= \phi_1(x)$ , $F_{\mu \nu}(x,0) =  F^0_{\mu \nu}(x)$ , $A_{\nu}(x,0) = a_{0 \nu}(x)$ , $\partial_t A_{\nu}(x,0) = \dot{a}_{0 \nu}(x)$. The potential $A$ is not uniquely determined but one has gauge freedom. The Maxwell-Klein-Gordon equation is namely invariant under the gauge transformation
$\phi \to \phi' = e^{i\chi}\phi$ , $A_{\mu} \to A'_{\mu} = A_{\mu} + \partial_{\mu} \chi$
for any $\chi: {\mathbb R}^{n+1} \to {\mathbb R}$.

Most of the results obtained so far were given in Coulomb gauge $\partial^j A_j = 0$. Klainerman and Machedon \cite{KM} showed global well-posedness in energy space and above, i.e. for data $\phi_0 \in H^s$ , $\phi_1 \in H^{s-1}$ , $a_{0 \nu} \in H^s$ , $\dot{a}_{0 \nu} \in H^{s-1}$ with $s \ge 1$ in $n=3$ dimensions improving earlier results of Eardley and Moncrief \cite{EM} for smooth data. They used that the nonlinearities fulfill a null condition in the case of the Coulomb gauge. This global well-posedness result was improved by Keel, Roy and Tao \cite{KRT}, who had only to assume $s > \frac{\sqrt 3}{2}$. Local well-posedness for low regularity data was shown by Cuccagna \cite{C} for $s > 3/4$ and finally almost down to the critical regularity with respect to scaling by Machedon and Sterbenz \cite{MS} for $s > 1/2$, all these results for three space dimensions and in Coulomb gauge. 

In two space dimensions in Coulomb gauge Czubak and Pikula \cite{CP} proved local well-posedness provided that $\phi_0 \in H^s$ , $\phi_1 \in H^{s-1}$ , $a_{0 \nu} \in H^r$ , $\dot{a}_{0 \nu} \in H^{r-1}$,  where $1 \ge s=r > \frac{1}{2}$ or $s=\frac{5}{8}+\epsilon$ , $r=\frac{1}{4}+\epsilon$. 

In four space dimensions Selberg \cite{S} showed local well-posedness in Coulomb gauge for $s>1$. Recently Krieger, Sterbenz and Tataru \cite{KST} showed global well-posedness for data with small energy data ($s=1$) for $n=4$, which is the critical space. For space dimension $n \ge 6$ and small critical Sobolev norm for the data local well-posedness was shown by Rodnianski and Tao \cite{RT}. In general the problem seems to be easier in higher dimensions. In temporal gauge local well-posedness was shown for $n=3$ and $ s > 3/4$ for the more general Yang-Mills equations by Tao \cite{T}.

We are interested to consider the Maxwell-Klein-Gordon equations in Lorenz gauge $\partial^{\mu} A_{\mu} = 0$ which was considered much less in the literature because the nonlinear term $Im(\phi \overline{\partial_{\mu} \phi})$ has no null structure. There is a result by Moncrief \cite{M} in two space dimensions for smooth data, i.e. $s \ge 2$. In three space dimensions the most important progress was made by Selberg and Tesfahun \cite{ST} who were able to circumvent the problem of the missing null condition in the equations for $A_{\mu}$ by showing that the decisive nonlinearities in the equations for $\phi$ as well as $F_{\mu \nu}$ fulfill such a null condition which allows to show that global well-posedness holds for finite energy data, i.e. $\phi_0 \in H^1$, $\phi_1 \in L^2$ , $F^0_{\mu \nu} \in L^2$ , $a_{0\nu} \in \dot{H}^1$ , $\dot{a}_{0 \nu} \in L^2$, and three space dimensions, where $\phi \in C^0({\mathbb R},H^1) \cap C^1({\mathbb R},L^2)$ and $F_{\mu \nu} \in C^0({\mathbb R},L^2)$. The potential possibly loses some regularity compared to the data but as remarked also by the authors this is not the main point because one is primarily interested in the regularity of $\phi$ and $F_{\mu \nu}$. Persistence of higher regularity for the solution also holds.

A null structure in Lorenz gauge was first detected for the Maxwell-Dirac system by d'Ancona, Foschi and Selberg \cite{AFS1}.

The paper \cite{ST} is the basis for our results. We show that in space dimensions $n=3$ and $n=2$
local well-posedness can also be proven for less regular data without finite energy, namely for $s > 3/4$ (for $n=2$ for a slightly different data space and $s \ge \frac{3}{4}$). We also consider the case $n \ge 4$ and prove local well-posedness for $ s > \frac{n}{2}-\frac{3}{4} $. These results rely on null conditions of most of the nonlinear terms. The necessary bilinear estimates in wave-Sobolev spaces were formulated in arbitrary dimension $n$ and proven for $n=2$ and $n=3$ by d'Ancona, Foschi and Selberg in \cite{AFS3} and \cite{AFS2}, respectively. We prove a special case in dimension $n \ge 4$ based mainly on a result by Klainerman and Tataru \cite{KT}.

We now formulate our main result. We assume the Lorenz condition
\begin{equation}
\label{6}
\partial^{\mu} A_{\mu} = 0
\end{equation}
and Cauchy data
\begin{align}
\label{7}
\phi(x,0) &= \phi_0(x) \in H^s \quad , \quad \partial_t \phi(x,0) = \phi_1(x) \in H^{s-1} \, , \\
\label{8}
F_{\mu \nu}(x,0)& = F^0_{\mu \nu}(x) \,\mbox{with}\, F^0_{\mu \nu} \in H^{s-1} \, \mbox{for} \, n=3 \,\mbox{and}\, D^{-\epsilon} F^0_{\mu \nu} \in H^{s-1+\epsilon} \,\mbox{for}\, n=2 \, , 
\end{align}
where $\epsilon$ is a small positive constant and
\begin{equation}
\label{9}
A_{\nu}(x,0) = a_{0 \nu}(x) \quad , \quad \partial_t A_{\nu}(x,0) = \dot{a}_{0 \nu}(x) \, ,
\end{equation} 
which fulfill the following conditions
\begin{equation}
\label{10}
a_{00} = \dot{a}_{00} = 0 \, , 
\end{equation}
\begin{equation}
\label{11}
\nabla a_{0j} \in H^{s-1} \, , \,  \dot{a}_{0j} \in H^{s-1}\, \mbox{for} \, n=3 \, ,
\end{equation}
\begin{equation}
\label{11'}
D^{1-\epsilon} a_{0j} \in H^{s-1+\epsilon} \, , \, D^{-\epsilon}\dot{a}_{0j} \in H^{s-1+\epsilon} \, \mbox{for} \, n=2 \, ,
\end{equation}
\begin{equation}
\label{12}
\partial^k a_{0k} = 0 \, ,
\end{equation}
\begin{equation}
\label{13}
\partial_j a_{0k} - \partial_k a_{0j} = F^0_{jk} \, ,
\end{equation}
\begin{equation}
\label{14}
\dot{a}_{0k} = F^0_{0k} \, , 
\end{equation}
\begin{equation}
\label{15}
\partial^k F^0_{0k}  = Im(\phi_0 \overline{\phi}_1) \, .
\end{equation}
(\ref{10}) can be assumed because otherwise the Lorenz condition does not determine the potential uniquely. (\ref{12}) follow from the Lorenz condition (\ref{6}) in connection with (\ref{10}). (\ref{13}) follows from (\ref{3}), similarly (\ref{14}) from (\ref{3}) and (\ref{10}). (\ref{1}) requires
$$ \partial^k F^0_{0k} = j_0(0) = Im(\phi_0 \overline{\phi}_1) + |\phi_0|^2 a_{00} = Im(\phi_0 \overline{\phi}_1) \, $$
thus (\ref{15}). By (\ref{12}) we have
$$ \Delta a_{0j} = \partial^k \partial_k a_{0j} = \partial^k(\partial^j a_{0k} - F^0_{jk}) = - \partial^k F^0_{jk} \, , $$
so that $a_{0j}$ is uniquely determined as
$$ a_{0j} = (-\Delta)^{-1} \partial^k F^0_{jk} $$
and fulfills (\ref{11}) and (\ref{11'}).

We define the wave-Sobolev spaces $X^{s,b}_{\pm}$ as the completion of the Schwarz space $\mathcal{S}({\mathbb R}^{n+1})$ with respect to the norm
$$ \|u\|_{X^{s,b}_{\pm}} = \| \langle \xi \rangle^s \langle  \tau \pm |\xi| \rangle^b \widehat{u}(\tau,\xi) \|_{L^2_{\tau \xi}}  $$
and $X^{s,b}_{\pm}[0,T]$ as the space of the restrictions to $[0,T] \times \mathbb{R}^n$.

We also define the spaces $H^{s,b}$ as the completion of  $\mathcal{S}({\mathbb R}^{n+1})$ with respect to the norm
$$ \|u\|_{H^{s,b}} =  \| \langle \xi \rangle^s \langle  |\tau| - |\xi| \rangle^b \widehat{u}(\tau,\xi) \|_{L^2_{\tau \xi}}  \, .$$

Let $\Lambda^{\alpha}$ , $D^{\alpha}$ , $\bar{D}_{\pm}^{\alpha}$ , $D_-^{\alpha}$ , $D_+^{\alpha}$ and $\Lambda_+^{\alpha}$ be the multipliers with symbols $\langle \xi \rangle^{\alpha}$, 
$|\xi|^{\alpha}$ , $\langle \tau \pm |\xi| \rangle^{\alpha}$ ,
 $ ||\tau|-|\xi||^{\alpha}$ , $ (|\tau |+|\xi|)^{\alpha}$ and $\langle |\tau|+|\xi| \rangle^{\alpha}$ , respectively, where $ \langle \, \cdot \, \rangle = (1+|\, \cdot \,|^2)^{\frac{1}{2}}$ .

$\Box = \partial_t^2 - \Delta$ is the d'Alembert operator. $a+ = a+\epsilon$ for a sufficiently small $\epsilon > 0$ . 

We also use the notation $ u \precsim v $ , if $|\widehat{u}| \lesssim \widehat{v}$ . \\[1em]
Our main theorem reads as follows:
\begin{theorem}
\label{Theorem1}
If $n \ge 3$ assume $s > \frac{n}{2}-\frac{3}{4} $ , $r > \frac{n}{2}-1$ and $s \ge r \ge s-1$ , $2r-s > \frac{n-3}{2} $ , $2s-r > \frac{n-1}{2}$ . 
If $n=2$ assume $ s \ge \frac{3}{4} $ , $r >\frac{1}{4}$ and $s \ge r \ge s-1$  , $2r-s > -\frac{1}{4}$ , $2s-r > \frac{3}{4}$ . \\
The data are assumed to fulfill (\ref{7}) - (\ref{15}). Then the problem (\ref{1}) - (\ref{6}) has a unique local solution
$$ \phi \in X_+^{s,\frac{1}{2}+}[0,T] +X_-^{s,\frac{1}{2}+}[0,T] \, , \, \partial_t \phi \in X_+^{s-1,\frac{1}{2}+}[0,T] +X_-^{s-1,\frac{1}{2}+}[0,T]$$
and
$$ F_{\mu \nu} \in X_+^{s-1,\frac{1}{2}+}[0,T] +X_-^{s-1,\frac{1}{2}+}[0,T]$$
in the case $n \ge 3$ and
$$ D^{-\epsilon} F_{\mu \nu} \in H^{s-1+\epsilon,\frac{1}{2}+}[0,T] \, , \, D^{-\epsilon} \partial_t F_{\mu \nu} \in H^{s-2+\epsilon,\frac{1}{2}+}[0,T]$$
in the case $n=2$ relative to a potential $A=(A_0,A_1,...,A_n)$, where
$A = A^{hom}_+ + A^{hom}_- + A^{inh}_+ + A^{inh}_-$ with
$D A^{hom}_{\pm} \in X^{r-1,1-\epsilon_0}_{\pm}[0,T]$ and $A^{inh}_{\pm} \in X^{r,1-\epsilon_0}_{\pm}[0,T]$ for $n=3$ and $D^{1-\epsilon} A^{hom}_{\pm} \in X^{r-1+\epsilon,\frac{3}{4}+}_{\pm}[0,T]$ and $D^{\epsilon} A^{inh}_{\pm} \in X^{r-\epsilon,\frac{3}{4}+}_{\pm}[0,T]$ for $n=2$, where $\epsilon$ is a small positive number. 
\end{theorem}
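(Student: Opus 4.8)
\textit{Plan of proof.} The plan is to impose the Lorenz gauge (\ref{6}), reformulate (\ref{1})--(\ref{2}) as a coupled system of wave and Klein--Gordon equations, and solve it by a Picard iteration in the spaces $X^{s,b}_{\pm}$. Expanding $D^{(A)}_\mu D^{(A)\mu}\phi$ and using $\partial^\mu A_\mu =0$ turns (\ref{2}) into
\[ (\Box + m^2)\phi = -2iA^\mu \partial_\mu\phi - A^\mu A_\mu\phi \, , \]
while (\ref{1}) becomes $\Box A_\mu = j_\mu$ and, applying $\partial^\lambda$ to the Bianchi identity for $F_{\mu\nu}$ and using (\ref{1}), $\Box F_{\mu\nu} = \partial_\mu j_\nu - \partial_\nu j_\mu$. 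I would treat $\phi$ and $F_{\mu\nu}$ as the primary unknowns, carried in $X^{s,\frac{1}{2}+}_{\pm}$ resp. $X^{s-1,\frac{1}{2}+}_{\pm}$ (with the $D^{-\epsilon}$ twist for $n=2$), and regard $A_\mu = A^{hom}_\mu + A^{inh}_\mu$ as auxiliary: $A^{hom}_\mu$ is the free wave evolution of the data (\ref{9})--(\ref{14}), recovered only at the lower level $r$ via $a_{0j}=(-\Delta)^{-1}\partial^k F^0_{jk}$, and $A^{inh}_\mu = \Box^{-1}j_\mu$ is the Duhamel term; both are placed in spaces with time exponent larger than $\frac{1}{2}$ (close to $1$ for $n\ge3$) so that $A$ may act as a rough coefficient. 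Each unknown is split into its two half-wave pieces $u=u_++u_-$ (using $\partial_t\mp i\langle\nabla\rangle$ for $\phi$, $\partial_t\mp i|\nabla|$ for $A$, $F$) so that each piece lies in the appropriate space $X^{\,\cdot\,,\,\cdot\,}_{\pm}$.

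The decisive structural input, due to Selberg--Tesfahun \cite{ST}, is that the dangerous quadratic terms carry a null structure even though $Im(\phi\overline{\partial_\mu\phi})$ itself does not. In the equation for $F_{\mu\nu}$ the outer derivatives combine:
\[ \partial_\mu Im(\phi\overline{\partial_\nu\phi}) - \partial_\nu Im(\phi\overline{\partial_\mu\phi}) = 2\,Im(\partial_\mu\phi\,\overline{\partial_\nu\phi}) \, , \]
a linear combination of the null forms $Q_{\mu\nu}(\phi,\overline\phi)$, while the remainder $\partial_\mu(|\phi|^2A_\nu) - \partial_\nu(|\phi|^2A_\mu) = (\partial_\mu|\phi|^2)A_\nu - (\partial_\nu|\phi|^2)A_\mu + |\phi|^2 F_{\mu\nu}$ is cubic or carries the milder factor $F_{\mu\nu}$. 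In the equation for $\phi$ the term $A^\mu\partial_\mu\phi$, after inserting the half-wave decompositions of $A$ and $\phi$ and using that $A_\mu^{\pm}$ solves a half-wave equation, is once more a sum of null forms of type $Q_{\mu\nu}$ and $Q_0$ in the (wave)$\times$(Klein--Gordon) interaction, with $A^\mu A_\mu\phi$ purely cubic. I would record these reductions precisely, so that everything is reduced to estimating the model expressions $Q_{ij}(u,v)$, $Q_0(u,v)$ and plain products $uv$.

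The analytic core is then a finite list of bilinear and trilinear estimates in wave-Sobolev spaces. For the null-form terms one uses the symbol bounds $|q_{\mu\nu}(\xi,\eta)|\lesssim|\xi||\eta|\,\angle(\xi,\eta)$ and $|q_0(\xi,\eta)|\lesssim|\xi||\eta|\,\angle(\xi,\eta)^2$ together with the standard estimate of the angle $\angle(\xi,\eta)$ by a quotient of the three modulation weights, to convert each such estimate into a product estimate $X^{\sigma_1,\beta_1}\cdot X^{\sigma_2,\beta_2}\hookrightarrow X^{-\sigma_0,-\beta_0}$ of exactly the type established systematically by d'Ancona--Foschi--Selberg in \cite{AFS2} for $n=3$ and \cite{AFS3} for $n=2$; the hypotheses $s>\frac{n}{2}-\frac{3}{4}$, $r>\frac{n}{2}-1$, $2r-s>\frac{n-3}{2}$, $2s-r>\frac{n-1}{2}$ (and their $n=2$ counterparts) are precisely what is needed for these products, and also for the genuinely non-null estimate for $Im(\phi\overline{\partial_\mu\phi})$ that drives the potential at the reduced level $r$, to close. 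For $n\ge4$ the one product estimate that is required is not contained in \cite{AFS2,AFS3}, and I would prove it directly from the Klainerman--Tataru bilinear estimate \cite{KT}. Since all these estimates close with $\epsilon$ to spare in the $b'$-exponent, the usual time-localization lemma for $X^{s,b}$-spaces yields a positive power of $T$, and a contraction-mapping argument on $[0,T]\times\mathbb{R}^n$ gives a unique local solution; persistence of higher regularity follows in the standard way.

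I expect the main obstacle to be the interplay of the last two steps: carrying out the Selberg--Tesfahun null-form extraction at the low regularities imposed here, where there is almost no slack in the Sobolev exponents, and matching the resulting products exactly to the available range in \cite{AFS2,AFS3} --- in particular showing that the quadratic term feeding the potential, which genuinely lacks null structure, can still be absorbed at a level $r$ as low as $\frac{n}{2}-1+$, and closing the borderline cases in dimension $n\ge4$ via \cite{KT}.
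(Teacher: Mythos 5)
The overall strategy you propose -- pass to the Lorenz-gauge reformulation, split into half-wave pieces, extract the Selberg--Tesfahun null structures from $A^\mu\partial_\mu\phi$ and from $\partial_\mu j_\nu - \partial_\nu j_\mu$, and close the iteration with the d'Ancona--Foschi--Selberg product estimates (plus a Klainerman--Tataru substitute for $n\ge4$) -- is exactly the paper's, and your identification of the null forms and of \cite{KT} as the bridge for $n\ge4$ is correct. There are, however, two places where the plan as written would not close.

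First, the iteration scheme is not well-posed as stated. You propose to treat $\phi$ and $F_{\mu\nu}$ as the primary unknowns and to regard $A_\mu$ as ``auxiliary,'' with $A^{inh}_\mu=\Box^{-1}j_\mu$. But $j_\mu=Im(\phi\overline{\partial_\mu\phi})+|\phi|^2A_\mu$ depends on $A$, and $A$ cannot be reconstructed from $F$ alone, so $A$ is not a derived quantity: it must itself be iterated. If you instead iterate all three of $\phi,A,F$, you must separately verify that the fixed point satisfies the compatibility $F_{\mu\nu}=\partial_\mu A_\nu-\partial_\nu A_\mu$ (by a linear-uniqueness argument applied to the difference). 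The paper avoids this altogether: the contraction is carried out for $(\phi_\pm,A_\pm)$ alone (Theorem \ref{Theorem2}), and the improved regularity of $F_{\mu\nu}$ -- which is \emph{defined} as $\partial_\mu A_\nu-\partial_\nu A_\mu$ -- is established a posteriori in Section 6 by reading off the wave equations (\ref{2.10}),(\ref{2.11}) for $F$ and re-running the null-form/bilinear estimates (\ref{4.10})--(\ref{4.15}). This is why $A$ appears at the lower level $r$ (with the higher modulation exponent) while $F$ comes out at $s-1$.

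Second, and more importantly, you omit the verification that the Lorenz condition actually holds for the constructed solution. This is not a formality: the null-form rewriting of $P_1=-A^0\partial_t\phi+A^{cf}\cdot\nabla\phi$ that makes Claim 1 in Section 5 work uses $\partial_kA^k=\partial_tA^0$, i.e.\ Lorenz gauge, and during the iteration one replaces $A^\mu\partial_\mu\phi$ by its Lorenz-gauged avatar. The fixed point therefore satisfies the \emph{rewritten} equation, which agrees with (\ref{17}) only modulo a term containing $u:=\partial^\mu A_\mu$ (see the computation of $\mathcal{M}$ in Section 6, where the error is $2i(-\Delta)^{-1}\nabla u\cdot\nabla\phi$). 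One must then show $u\equiv0$: the paper derives the linear wave equation $\square u=-2Re(\phi\nabla\overline\phi)\cdot(-\Delta)^{-1}\nabla u+|\phi|^2u$ for $u$, uses the compatibility conditions (\ref{10}),(\ref{12}),(\ref{15}) to get $u(0)=\partial_tu(0)=0$, and concludes $u\equiv0$ by uniqueness. Without this step you have solved the reformulated system, but not the original Maxwell--Klein--Gordon system in Lorenz gauge, and the null structure you invoked has not actually been justified for the solution you produced.
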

\noindent{\bf Remarks:} 
\begin{enumerate}
\item We immediately obtain $\phi \in C^0([0,T],H^s) \cap C^1([0,T],H^{s-1}) $ , and $F_{\mu \nu} \in C^0([0,T],H^{s-1}) \cap C^1([0,T],H^{s-2}) $ for $n=3$ and $D^{-\epsilon} F_{\mu \nu} \in C^0([0,T],$ $H^{s-1+\epsilon}) \cap C^1([0,T],H^{s-2+\epsilon})$ for $n=2$ . 
\item The case $  \frac{n}{2}-\frac{3}{4}+\delta \, , \, r = \frac{n}{2}-1+\delta $ 
for $n \ge 3$ and $s= \frac{3}{4}+\delta \, , \, r = \frac{1}{4} + \delta $ for $n=2$ is admissible, where $\delta > 0$ is an arbitrary number.
\end{enumerate}

We can reformulate the system (\ref{1}),(\ref{2}) under the Lorenz condition (\ref{6}) as follows:
$$\square A_{\mu} = \partial^{\nu} \partial_{\nu} A_{\mu} = \partial^{\nu}(\partial^{\mu} A_{\nu} - F_{\mu \nu}) = -\partial^{\nu} F_{\mu \nu} = - j_{\mu} \, , $$
thus (using the notation $\partial = (\partial_0,\partial_1,...,\partial_n)$):
\begin{equation}
\label{16}
\square A = -Im (\phi \overline{\partial \phi}) - A|\phi|^2 =: N(A,\phi) 
\end{equation}
and
\begin{align*}
m^2 \phi & = D^{(A)}_{\mu} D^{(A)\mu} \phi = \partial_{\mu} \partial^{\mu} \phi -iA_{\mu} \partial^{\mu} \phi -i\partial_{\mu}(A^{\mu} \phi) - A_{\mu}A^{\mu} \phi \\
& = \square \phi - 2i A^{\mu} \partial_{\mu} \phi - A_{\mu} A^{\mu} \phi \,
\end{align*}
thus
\begin{equation}
\label{17}
(\square -m^2) \phi = 2i A^{\mu} \partial_{\mu} \phi + A_{\mu} A^{\mu} \phi =: M(A,\phi) \, .
\end{equation}
Conversely, if $\square A_{\mu} = -j_{\mu}$ and $F_{\mu \nu} := \partial_{\mu} A_{\nu} - \partial_{\nu} A_{\mu}$ and the Lorenz condition (\ref{6}) holds then
$$ \partial^{\nu} F_{\mu \nu} = \partial^{\nu}(\partial_{\mu} A_{\nu} - \partial_{\nu} A_{\mu}) = \partial_{\mu} \partial^{\nu} A_{\nu} - \partial^{\nu} \partial_{\nu} A_{\mu} = -\square A_{\mu} = j_{\mu} \, $$
thus (\ref{1}),(\ref{2}) is equivalent to (\ref{16}),(\ref{17}), if (\ref{3}),(\ref{4}) and (\ref{6}) are satisfied.

The paper is organized as follows: in chapter 2 we prove the null structure of $A^{\mu} \partial_{\mu} \phi$ and in the Maxwell part. In chapter 3 the local well-posedness result for (\ref{16}),(\ref{17}) is formulated (Theorem \ref{Theorem2}). This relies on the null structure of $A^{\mu} \partial_{\mu} \phi$ and the bilinear estimates in wave-Sobolev spaces by d'Ancona, Foschi and Selberg (\cite{AFS3} and \cite{AFS2}), which are given in chapter 4. In chapter 5 we prove Theorem \ref{Theorem2}. In chapter 6 we prove our main theorem (Theorem \ref{Theorem1}). We show that the Maxwell-Klein-Gordon system is satisfied and $F_{\mu \nu}$ has the desired regularity properties using the null structure of the Maxwell part and again the bilinear estimates by d'Ancona, Foschi and Selberg.

\section{Null structure}
\noindent{\bf Null structure of $A^{\mu} \partial_{\mu}\phi$.} \\
Using the Riesz transform $R_k := D^{-1} \partial_k $ and $\mathbf{A} = (A_1,...,A_n)$ we use the decomposition 
$$ \mathbf{A} = A^{df} + A^{cf} \, ,$$
where
$$ A^{df}_j = R^k(R_j A_k - R_k A_j) \quad , \quad A^{cf}_j = - R_j R_k A^k \, , $$
so that
$$ A^{\mu} \partial_{\mu} \phi  = P_1 + P_2 \, , $$
with
\begin{align*}
 P_1 &= -A^0 \partial_t \phi + A^{cf} \cdot \nabla \phi \\
& = -A^0 \partial_t \phi - D^{-2} \partial_j \partial_k A^k \partial^j \phi\\
& = -A^0 \partial_t \phi - D^{-2}  \nabla \partial_t A^0 \cdot \nabla \phi \, ,
\end{align*}
where we used the Lorenz gauge $\partial_k A^k = \partial_t A^0$ ,
and 
$$ P_2 = A^{df} \cdot \nabla \phi \, . $$
Now define
$$ Q_{jk}(\phi,\psi) := \partial_j \phi \partial_k \psi - \partial_k \phi \partial_j \psi \, , $$
so that
\begin{align*}
&\sum_{j,k} Q_{jk}(D^{-1}(R_j A_k - R_k A_j),\phi) \\
& = \sum_{j,k} [ \partial_j(D^{-1}(R_j A_k - R_k A_j))\partial_k \phi -  \partial_k(D^{-1}(R_j A_k - R_k A_j)) \partial_j \phi] \\
& = \sum_{j,k} [ D^{-2}(\partial_j^2 A_k - \partial_j \partial_k A_j) \partial_k \phi - D^{-2}(\partial_k \partial_j A_k - \partial_k^2 A_j) \partial_j \phi] \\
&=-2\big(\sum_{j,k} D^{-2} \partial_k \partial_j A_k \partial_j \phi + \sum_j  A_j \partial_j \phi \big) \\
& = -2 \big(\sum_{j,k} R_k R_j A_k  \partial_j \phi- \sum_{j,k}  R_k R_k A_j \partial_j \phi \big)\\
& = -2P_2 \, .
\end{align*}
Thus the symbol $p_2$ of $P_2$ fulfills
\begin{equation}
\label{2.0}
p_2(\eta,\xi) = \frac{1}{2} \sum_{j,k} \left|\frac{1}{|\eta|^2}(\eta_k \xi_j - \eta_j \xi_k) (\eta_j - \eta_k) \right| \lesssim \sum_{j,k} \frac{|\eta_k \xi_j - \eta_j \xi_k|}{|\eta|} \lesssim |\xi| \angle(\eta,\xi) \, ,
\end{equation}
where $\angle(\eta,\xi)$ denotes the angle between $\eta$ and $\xi$.

Before we consider $P_1$ we define
$$ A_{\pm} := \frac{1}{2}(A \pm (iD)^{-1} A_t) \, , $$
so that $A=A_+ + A_-$ and $A_t = iD(A_+-A_-)$, and
$$\phi_{\pm} := \frac{1}{2}(\phi \pm (i \Lambda_m)^{-1} \phi_t) $$
with $\Lambda_m := (m^2-\Delta)^{\frac{1}{2}}$, so that $\phi = \phi_+ + \phi_-$ and $\phi_t = i \Lambda_m (\phi_+ - \phi_-)$. 

We transform (\ref{16}),(\ref{17}) into
\begin{align}
\label{2.1}
(i\partial_t \pm \Lambda_m) \phi_{\pm} & = -(\pm 2 \Lambda_m)^{-1} M(A,\phi) \\
\label{2.2}
(i\partial_t \pm D) A_{\pm} & = -(\pm 2 D)^{-1} N(A,\phi) \, .
\end{align}
Then we obtain
\begin{align}
\label{2.2a}
iP_1 & = (A_{0+}+A_{0-})\Lambda (\phi_+ - \phi_-) + D^{-1} \nabla(A_{0+} - A_{0-}) \cdot \nabla(\phi_+ + \phi_-) \\
\nonumber
& = \sum_{\pm_1,\pm_2} \pm_2 \mathcal{A}_{(\pm_1,\pm_2)} (A_{0 \pm1},\phi_{\pm 2}) \, ,
\end{align}
where
$$ \mathcal{A}_{(\pm_1,\pm_2)}(f,g) := f \Lambda_m g + D^{-1} \nabla(\pm_1 f) \cdot \nabla(\pm_2 g) \, . $$
Its symbol $a_{(\pm_1,\pm_2)} (\eta,\xi)$ is bounded by the elementary estimate (\cite{ST}, Lemma 3.1):
\begin{equation}
\label{2.3}
|a_{(\pm_1,\pm_2)} (\eta,\xi) | \sim \left|\langle \xi \rangle_m - \frac{(\pm_1 \eta) \cdot (\pm_2 \xi)}{|\eta|}\right| \lesssim m + |\xi| \angle(\pm_1 \eta, \pm_2 \xi) \,,
\end{equation}
where $ \langle \xi \rangle_m = (m^2 + |\xi|^2)^{\frac{1}{2}}$ .

We now use the well-known
\begin{lemma}
\label{Lemma} Assume $0 \le \alpha \le \frac{1}{2}$.
For arbitrary signs $\pm,\pm_1,\pm_2$ , $\lambda,\mu \in \mathbb{R}$ and $\eta,\xi \in {\mathbb R}^n$ the following estimate holds:
\begin{equation}
\label{angle1}
\angle (\pm_1 \eta,\pm_2 \xi)  \lesssim \frac{\langle (\lambda+\mu) \pm |\eta+\xi| \rangle^{\frac{1}{2}-\alpha}}{\min(\langle\eta\rangle, \langle \xi \rangle)^{\frac{1}{2}-\alpha}}   + \frac{ \langle \lambda \pm_1 |\eta| \rangle^{\frac{1}{2}} 
+ \langle  \mu \pm_2 |\xi |\rangle^{\frac{1}{2}}}{\min(\langle \eta \rangle,\langle \xi \rangle)^{\frac{1}{2}}}
\end{equation}
In the case of different signs $\pm_1$ and $\pm_2$ the following (improved) estimate holds:
\begin{align}
\nonumber
\angle (\pm_1 \eta,\pm_2 \xi) & \lesssim \frac{|\eta + \xi|^{\frac{1}{2}}}{|\eta|^{\frac{1}{2}} | \xi|^{\frac{1}{2}}} \big( \min(|\eta|,|\xi|)^{\alpha} \langle (\lambda + \mu) \pm |\eta + \xi| \rangle^{\frac{1}{2}-\alpha} + \langle \lambda \pm_1 |\eta| \rangle^{\frac{1}{2}} \\
\nonumber
& \hspace{5em} + \langle \mu \pm_2 |\xi |\rangle^{\frac{1}{2}}\big) \\
\label{angle2}
& \lesssim \frac{|\eta + \xi|^{\frac{1}{2}}}{|\eta|^{\frac{1}{2}} |\xi|^{\frac{1}{2}}} \min(|\eta|,| \xi|)^{\alpha} \langle (\lambda + \mu) \pm |\eta + \xi| \rangle^{\frac{1}{2}-\alpha} \\ \nonumber
&
\hspace{5em} + \frac{\langle \lambda \pm_1 |\eta| \rangle^{\frac{1}{2}} + \langle \mu \pm_2 |\xi|\rangle^{\frac{1}{2}}}{\min(|\eta|,|\xi|)^{\frac{1}{2}}} \, .
\end{align}
\end{lemma}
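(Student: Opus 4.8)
\noindent\emph{Proof proposal.} The plan is to reduce the lemma to two elementary ingredients: an exact expression for $1-\cos\angle(\pm_1\eta,\pm_2\xi)$ in terms of $|\eta|$, $|\xi|$, $|\eta+\xi|$, and the algebraic ``transference'' identity trading such a frequency expression for the three modulation weights $R_1:=\langle\lambda\pm_1|\eta|\rangle$, $R_2:=\langle\mu\pm_2|\xi|\rangle$ and $L:=\langle(\lambda+\mu)\pm|\eta+\xi|\rangle$. Write $\theta:=\angle(\pm_1\eta,\pm_2\xi)\in[0,\pi]$, $m:=\min(|\eta|,|\xi|)$ and $M:=\max(|\eta|,|\xi|)$. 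Since all three weights are $\ge 1$, if $m\lesssim 1$ then both right-hand sides are $\gtrsim 1\gtrsim\theta$ and there is nothing to prove; hence I may assume $m\gtrsim 1$, so that $m\sim\min(\langle\eta\rangle,\langle\xi\rangle)$ and the two denominators may be read as powers of $m$.

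\smallskip\noindent\emph{Step 1 (geometry).} Starting from $\theta^2\sim 1-\cos\theta$, $\cos\theta=\pm_1\pm_2\,\eta\cdot\xi/(|\eta||\xi|)$ and $2\,\eta\cdot\xi=|\eta+\xi|^2-|\eta|^2-|\xi|^2$, a direct computation gives
\[
\theta^2\ \sim\ \frac{(|\eta|+|\xi|)^2-|\eta+\xi|^2}{|\eta||\xi|}\qquad\text{if }\pm_1=\pm_2,
\]
\[
\theta^2\ \sim\ \frac{|\eta+\xi|^2-(|\eta|-|\xi|)^2}{|\eta||\xi|}\qquad\text{if }\pm_1\neq\pm_2.
\]
Factoring the numerators, and bounding the ``sum'' factor by $|\eta|+|\xi|+|\eta+\xi|\lesssim M$ in the first case and by $|\eta+\xi|+\big||\eta|-|\xi|\big|\le 2|\eta+\xi|$ (using $\big||\eta|-|\xi|\big|\le|\eta+\xi|$) in the second, this reduces to
\[
\theta^2\ \lesssim\ \frac{B}{m}\quad(\pm_1=\pm_2),\qquad\qquad \theta^2\ \lesssim\ \frac{|\eta+\xi|}{|\eta||\xi|}\,B\quad(\pm_1\neq\pm_2),
\]
with $B:=|\eta|+|\xi|-|\eta+\xi|$ resp. $B:=|\eta+\xi|-\big||\eta|-|\xi|\big|$. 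In both cases $B\ge 0$, and a further application of the triangle inequality ($|\eta+\xi|-|\eta|\le|\xi|$, etc.) gives $B\le 2m$. The factor $|\eta+\xi|/(|\eta||\xi|)$ that survives in the opposite-sign case is exactly what produces the improved estimate (\ref{angle2}).

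\smallskip\noindent\emph{Step 2 (transference and interpolation).} I would then use the identity
\[
s_1|\eta|+s_2|\xi|-s_0|\eta+\xi|\ =\ (\lambda+s_1|\eta|)+(\mu+s_2|\xi|)-\big((\lambda+\mu)+s_0|\eta+\xi|\big),
\]
valid for all signs $s_0,s_1,s_2$ and all $\lambda,\mu$. With $s_1=\pm_1$, $s_2=\pm_2$ and the ``resonant'' choice of $s_0$ it yields $B\le R_1+R_2+L$ (in the opposite-sign case one notes $B\le|\eta+\xi|\pm(|\eta|-|\xi|)$, so \emph{either} output sign $\pm$ works); for the non-resonant $\pm$ the same identity instead forces one of $R_1,R_2,L$ to exceed $\sim m$, and then the corresponding term on the right of (\ref{angle1})/(\ref{angle2}) is $\gtrsim 1\gtrsim\theta$, so that sub-case is trivial. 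In the resonant case, combining $B\le R_1+R_2+L$ with $0\le B\le 2m$ and splitting on whether $L\le m$ or $L>m$ gives $B\ \lesssim\ m^{2\alpha}L^{1-2\alpha}+R_1+R_2$; here the hypothesis $0\le\alpha\le\frac12$ is used precisely to validate the two resulting elementary inequalities ($L\le m\Rightarrow L\le m^{2\alpha}L^{1-2\alpha}$, and $L>m\Rightarrow 2m\le 2m^{2\alpha}L^{1-2\alpha}$). Substituting this bound into $\theta^2\lesssim B/m$ — which covers the same-sign case of (\ref{angle1}), and its opposite-sign case as well after $|\eta+\xi|/(|\eta||\xi|)\le 2/m$ — and taking square roots via $(a+b+c)^{1/2}\lesssim a^{1/2}+b^{1/2}+c^{1/2}$ yields (\ref{angle1}); substituting instead into $\theta^2\lesssim\big(|\eta+\xi|/(|\eta||\xi|)\big)B$ and then absorbing the $R_i^{1/2}$ terms via $|\eta+\xi|/(|\eta||\xi|)\le 2/m$ yields (\ref{angle2}).

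\smallskip\noindent The only genuinely delicate point — and the one I would check most carefully — is the sign bookkeeping of Step 2: for each triple $(\pm_1,\pm_2,\pm)$ one must pin down which nonnegative remainder $B$, in which of its two equivalent forms, is dominated by the three weights, and confirm that the remaining ``wrong-sign'' configurations really do fall under the trivial bound $\theta\lesssim 1$. Everything else is the two elementary identities of Step 1 and the exponent manipulations, all legitimate exactly on the range $0\le\alpha\le\frac12$.
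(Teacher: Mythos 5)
Your argument is correct and, crucially, self-contained, whereas the paper's proof of this lemma is a bare citation to d'Ancona--Foschi--Selberg (Lemma 7 in their Dirac--Klein--Gordon paper) together with Selberg--Tesfahun, Lemma 4.3, plus the one-line remark that for opposite signs $\angle(\pm_1\eta,\pm_2\xi)=\angle(\eta,-\xi)$. What you supply is essentially the argument behind that citation: the factorization of $1-\cos\theta$, the identification of the nonnegative ``hyperbolic deficit'' $B$ (equal to $|\eta|+|\xi|-|\eta+\xi|$ or $|\eta+\xi|-\big||\eta|-|\xi|\big|$ according to the sign case, with $B\le 2\min(|\eta|,|\xi|)$), the algebraic transference of $B$ onto the three modulation weights $R_1,R_2,L$, and the case split $L\le m$ versus $L>m$ producing the interpolated factor $m^{2\alpha}L^{1-2\alpha}$, which is exactly where the hypothesis $0\le\alpha\le\tfrac12$ enters. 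Your sign bookkeeping in Step 2 also checks out: for equal input signs one output sign is resonant ($B\le R_1+R_2+L$) while the other forces $\max(R_1,R_2,L)\gtrsim m$, and for opposite input signs \emph{both} output signs yield $B\le R_1+R_2+L$, so no non-resonant sub-case appears there. One minor slip in the exposition: the preliminary reduction ``$m\lesssim 1\Rightarrow$ both right-hand sides are $\gtrsim 1\gtrsim\theta$'' is valid for (\ref{angle1}) and for the last displayed line of (\ref{angle2}), but not for the intermediate first line of (\ref{angle2}), whose prefactor $|\eta+\xi|^{1/2}/(|\eta|^{1/2}|\xi|^{1/2})$ can be arbitrarily small. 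This is harmless, because your core inequality $B\lesssim R_1+R_2+m^{2\alpha}L^{1-2\alpha}$ in fact holds for every $m>0$ (when $m\le 1$ one has $B\le 2m\le 2m^{2\alpha}\le 2m^{2\alpha}L^{1-2\alpha}$ since $L\ge1$), so the reduction to $m\gtrsim 1$ is really only needed to identify $\min(\langle\eta\rangle,\langle\xi\rangle)$ with $m$ in the denominators of (\ref{angle1}); for (\ref{angle2}) the main estimate is uniform in $m$ and you can dispense with the reduction entirely.
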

\begin{proof} These results follow from \cite{AFS}, Lemma 7 and the considerations ahead of and after that lemma, where we use that in the case of different signs $\pm_1$ and $\pm_2$ we have
$\angle (\pm_1 \eta,\pm_2 \xi) =  \angle (\eta,-\xi)$. Cf. also \cite{ST}, Lemma 4.3.
\end{proof}

Using the estimates for the symbols above and (\ref{angle1}) we summarize our results as follows:
\begin{align}
\label{Q1}
A^{\mu} \partial_{\mu} \phi  \precsim \sum_{\pm_1,\pm_2} & \Big( \bar{D}_{\pm}^{\half-2\epsilon} (\Lambda^{-\half+2\epsilon} A_{\pm_1} D \phi_{\pm_2}) + \bar{D}_{\pm}^{\half-2\epsilon}(A_{\pm_1} D^{\half + 2\epsilon} \phi_{\pm_2}) \\ \nonumber
& + \Lambda^{-\half} \bar{D}_{\pm_1}^{\half} A_{\pm_1} D \phi_{\pm_2} +\bar{D}_{\pm_1}^{\half} A_{\pm_1} D^{\half} \phi_{\pm_2} \\ \nonumber
& + \Lambda^{-\half} A_{\pm_1}\bar{D}_{\pm_2}^{\half} D \phi_{\pm_2} + A_{\pm_1} D^{\half}\bar{D}_{\pm_2}^{\half} \phi_{\pm_2} \Big) + \sum_{\pm_1,\pm_2} A_{\pm_1} \phi_{\pm_2} \, .
\end{align}
\\[0.5em]
{\bf Null structure in the Maxwell part.}\\
We start from Maxwell's equations (\ref{1}), i.e. $-\partial^0 F_{l0} + \partial^k F_{lk} = j_l$ and $\partial^k F_{0k} = j_0$ and obtain
\begin{align}
\nonumber
\square F_{k0} & = -\partial_0(\partial_0 F_{k0}) + \partial^l \partial_l F_{k0} \\ 
\nonumber
& = -\partial_0(\partial^l F_{kl} - j_k) + \partial^l \partial_l F_{k0} \\
\nonumber
& = -\partial^l \partial_0(\partial_k A_l - \partial_l A_k) + \partial_0 j_k + \partial^l \partial_l F_{k0} \\
\nonumber
& = -\partial^l[\partial_k(\partial_0 A_l - \partial_l A_0) - \partial_l(\partial_0 A_k - \partial_k A_0)] + \partial_0 j_k + \partial^l \partial_l F_{k0} \\
\nonumber
& = -\partial^l \partial_k F_{0l} + \partial^l \partial_l F_{0k} + \partial_0 j_k + \partial^l \partial_l F_{k0} \\
\nonumber
& =  -\partial^l \partial_k F_{0l} +\partial_0 j_k \\
\label{2.10}
& = -\partial_k j_0 + \partial_0 j_k
\end{align}
and
\begin{align}
\nonumber
\square F_{kl} & = -\partial_0 \partial_0 F_{kl} + \partial^m \partial_m F_{kl} \\\nonumber
& = -\partial_0 \partial_0 (\partial_k A_l - \partial_l A_k) + \partial^m \partial_m F_{kl} \\\nonumber
& = -\partial_0 \partial_k(\partial_0 A_l - \partial_l A_0) + \partial_0 \partial_l (\partial_0 A_k - \partial_k A_0) + \partial^m \partial_m F_{kl} \\\nonumber
& = -\partial_0 \partial_k F_{0l} + \partial_0 \partial_l F_{0k} + \partial^m \partial_m F_{kl} \\\nonumber
& = \partial_k \partial_0 F_{l0} - \partial_l \partial_0 F_{k0} + \partial^m \partial_m F_{kl} \\\nonumber
& = \partial_k (\partial^m F_{lm} - j_l) - \partial_l(\partial^m F_{km} - j_k)+ \partial^m \partial_m F_{kl} \\\nonumber
& = \partial_k \partial^m F_{lm} - \partial_l \partial^m F_{km} + \partial^m \partial_m F_{kl} +\partial_l j_k - \partial_k j_l \\\nonumber
& = \partial_k \partial^m(\partial_l A_m - \partial_m A_l) - \partial_l \partial^m(\partial_k A_m - \partial_m A_k)+ \partial^m \partial_m F_{kl} +\partial_l j_k - \partial_k j_l \\\nonumber
& = \partial^m \partial_m (\partial_l A_k - \partial_k A_l) + \partial^m \partial_m F_{kl} +\partial_l j_k - \partial_k j_l \\\nonumber
& = \partial^m \partial_m F_{lk} + \partial^m \partial_m F_{kl} +\partial_l j_k - \partial_k j_l \\
\label{2.11}
&   = \partial_l j_k - \partial_k j_l \, .
\end{align}
By the definition ({5}) of $j_{\mu}$ we obtain
\begin{align}
\nonumber
\partial_0 j_k - \partial_k j_0 
 =& \, Im(\partial_0 \phi \overline{\partial_k \phi}) + Im(\phi \overline{\partial_0 \partial_k \phi}) + \partial_0(A_k |\phi|^2) \\
 \nonumber
&- Im(\partial_k \phi \overline{\partial_0 \phi}) - Im(\phi \overline{\partial_k \partial_0 \phi}) - \partial_k(A_0 |\phi|^2) \\
\label{2.10a}
 =&\, Im(\partial_t \phi\overline{\partial_k \phi} - \partial_k \phi \overline{\partial_t \phi}) + \partial_t(A_k |\phi|^2) - \partial_k(A_0 |\phi|^2)
\end{align}
and
\begin{align}
\nonumber
\partial_l j_k - \partial_k j_l 
 =& \, Im(\partial_l \phi \overline{\partial_k \phi}) + Im(\phi \overline{\partial_l \partial_k \phi}) + \partial_l(A_k |\phi|^2) \\
 \nonumber
&- Im(\partial_k \phi \overline{\partial_l \phi}) - Im(\phi \overline{\partial_k \partial_l \phi}) - \partial_k(A_l |\phi|^2) \\
\label{2.11b}
 =&\, Im(\partial_l \phi\overline{\partial_k \phi} - \partial_k \phi \overline{\partial_l \phi}) + \partial_l(A_k |\phi|^2) - \partial_k(A_l |\phi|^2) \, .
\end{align}
After the decomposition $\phi = \phi_+ + \phi_-$ we have
\begin{equation}
\label{2.11c}
\partial_l \phi\overline{\partial_k \phi} - \partial_k \phi \overline{\partial_l \phi} = \sum_{\pm_1,\pm_2} \mathcal{C}_{\pm_1,\pm_2} (\phi_{\pm_1,\phi_{\pm_2}})                            \, , \end{equation}
where
\begin{equation}
\label{2.12}
\mathcal{C}_{\pm_1,\pm_2} (f,g) := \partial_l f \overline{\partial_k g} - \partial_k f  \overline{\partial_l g} \, .
\end{equation}
Its symbol
\begin{equation}
c_{\pm_1,\pm_2}(\eta,\xi) = \eta_l \xi_k - \eta_k \xi_l
\label{2.13}
\end{equation}
fulfills
\begin{equation}
\label{2.14}
|c_{\pm_1,\pm_2} (\eta,\xi) = |(\pm_1 \eta_l)(\pm_2 \xi_k) - (\pm_1 \eta_k)(\pm_2 \xi_l)| \lesssim |\eta||\xi| \angle(\pm_1 \eta,\pm_2 \xi) \, . 
\end{equation}
Similarly using $\partial_t \phi = i \Lambda_m (\phi_+ - \phi_-)$ we have
\begin{equation}
\label{2.10b}
\partial_t \phi\overline{\nabla \phi} - \nabla \phi \overline{\partial_t \phi} = \sum_{\pm_1,\pm_2} (\pm_1 1)(\pm_2 1)\mathcal{B}_{\pm_1,\pm_2} (\phi_{\pm_1,\phi_{\pm_2}})                            \, , \end{equation}
where
\begin{equation}
\label{2.15}
\mathcal{B}_{\pm_1,\pm_2} (f,g) := i(\Lambda_m f \overline{\nabla (\pm_2 g)} - \nabla (\pm_1 f)  \overline{\Lambda_m g}) \, .
\end{equation}
Its symbol
\begin{equation}
b_{\pm_1,\pm_2}(\eta,\xi) = \langle \eta \rangle_m (\pm_2 \xi) - \langle\xi \rangle_m (\pm_1 \eta)
\label{2.16}
\end{equation}
can be estimated elementarily (\cite{ST}, Lemma 3.2):
\begin{equation}
\label{2.17}
|b_{\pm_1,\pm_2} (\eta,\xi)| \lesssim m(|\eta| + |\xi|) + |\eta||\xi| \angle(\pm_1 \eta,\pm_2 \xi) \, . 
\end{equation}
\\
Combining these estimates for the symbols and (\ref{angle1}) we obtain
\begin{align}
\label{Q2}
& Im(\partial_t \phi\overline{\partial_k \phi} - \partial_k \phi \overline{\partial_t \phi}) +  Im(\partial_l \phi\overline{\partial_k \phi} - \partial_k \phi \overline{\partial_l \phi}) \\
\nonumber
& \precsim \sum_{\pm,\pm_1,\pm_2} \left( \bar{D}_{\pm}^{\half-2\epsilon} (D^{\half+2\epsilon} \phi_{\pm_1} D \phi_{\pm_2}) + D^{\half} \bar{D}_{\pm_1}^{\half} \phi_{\pm_1} D \phi_{\pm_2} +  D^{\half} \phi_{\pm_1}D \bar{D}_{\pm_2}^{\half} \phi_{\pm_2} \right) \\
\nonumber
&  \hspace{1em} +
\sum_{\pm_1,\pm_2} \phi_{\pm_1} D \phi_{\pm_2} \, .
\end{align}
For different signs $ \pm_1 $ and $\pm_2$ we use (\ref{angle2}) and obtain the bound
\begin{align}
\label{Q3}
& Im(\partial_t \phi\overline{\partial_k \phi} - \partial_k \phi \overline{\partial_t \phi}) +  Im(\partial_l \phi\overline{\partial_k \phi} - \partial_k \phi \overline{\partial_l \phi}) \\
\nonumber
& \precsim \sum_{\pm,\pm_1,\pm_2} \Big( \bar{D}_{\pm}^{\half-2\epsilon} D^{\half} (D^{\half+\epsilon} \phi_{\pm_1} D^{\half+\epsilon} \phi_{\pm_2}) + D^{\half} \bar{D}_{\pm_1}^{\half} \phi_{\pm_1} D \phi_{\pm_2} \\ \nonumber
& \hspace{5em }+ D^{\half} \phi_{\pm_1} D \bar{D}_{\pm_2}^{\half} \phi_{\pm_2} \Big)  +
\sum_{\pm_1,\pm_2} \phi_{\pm_1} D \phi_{\pm_2} \, .
\end{align}

\section{Local well-posedness}
Recall $\phi_{\pm}=\frac{1}{2}(\phi \pm (i\Lambda_m)^{-1}\phi_t)$ , so that $\phi=\phi_+ + \phi_-$ and $ \partial_t \phi = i \Lambda_m(\phi_+ - \phi_-)$, and $A_{\pm} = \frac{1}{2}(A \pm (iD)^{-1} A_t)$ so that $A=A_+ + A_-$ and $\partial_t A = iD(A_+-A_-)$, we write (\ref{2.1}),(\ref{2.2}) as follows:
\begin{align}
\label{3.1}
(i\partial_t \pm \Lambda_m) \phi_{\pm} & = - (\pm 2 \Lambda_m)^{-1} \mathcal{M}(\phi_+,\phi_-,A_+,A_-) \\
\label{3.2}
(i\partial_t \pm D)A_{\pm} & = -(\pm 2D)^{-1} \mathcal{N}(\phi_+,\phi_-,A_+,A_-) \, ,
\end{align}
where
\begin{align}
\label{3.3}
\mathcal{M}(\phi_+,\phi_-,A_+,A_-) & = A^{\mu} \partial_{\mu} \phi + A_{\mu} A^{\mu} \phi \\
\label{3.4}
\mathcal{N}_0(\phi_+,\phi_-,A_+,A_-) & = Im(\phi i \Lambda_m(\overline{\phi}_+ - \overline{\phi}_-)) - A_0 |\phi|^2 \\
\label{3.5}
\mathcal{N}_j(\phi_+,\phi_-,A_+,A_-) & = -Im(\phi \overline{\partial_j \phi}) -A_j |\phi|^2 \, .
\end{align}
The initial data are
\begin{align}
\label{3.5a}
\phi_{\pm}(0) & = \frac{1}{2}(\phi_0 \pm (i \Lambda_m)^{-1} \phi_1) \\
\label{3.5b}
A_{0\pm}(0)& = \frac{1}{2}(a_{00}\pm(iD^{-1})\dot{a}_{00}) = 0 \\
\label{3.5c}
A_{j\pm}(0) & = \frac{1}{2}(a_{0j} \pm (iD)^{-1} \dot{a}_{0j}) \, .
\end{align}
(\ref{3.5b}) follows from (\ref{10}). From (\ref{7}) we have $\phi_{\pm}(0) \in H^s$, and from (\ref{11}) we have for $r\le s$ in the case $n=3$: $\nabla a_{0j} \in H^{r-1}$ , $\dot{a}_{0j} \in H^{r-1}$, so that $\nabla A_{j\pm}(0) \in H^{r-1}$, whereas in the case $n=2$ we have $D^{1-\epsilon} a_{0j} \in H^{r-1+\epsilon}$ , $D^{-\epsilon} \dot{a}_{0j} \in H^{r-1+\epsilon}$, so that $D^{1-\epsilon} A_{j\pm}(0) \in H^{r-1+\epsilon}$.

We split $A_{\pm} = A_{\pm}^{hom} + A_{\pm}^{inh}$ into its homogeneous and inhomogeneous part, where $(i\partial_t \pm D)A_{\pm}^{hom} =0$ with data as in (\ref{3.5b}) and (\ref{3.5c}) and $A_{\pm}^{inh}$ is the solution of (\ref{3.2}) with zero data. By the linear theory we obtain for $b>1/2$:
$$ \|\phi_{\pm}^{hom}\|_{X^{s,b}_{\pm}} \lesssim \|\phi_{\pm}(0)\|_{H^s}$$
and for $\beta > 1/2$:
$$ \| D A_{\pm}^{hom} \|_{X_{\pm}^{r-1,\beta}} \lesssim \| D A_{\pm}(0)\|_{H^{r-1}} \quad \mbox{for} \, n=3 \, $$
$$\| D^{1-\epsilon} A_{\pm}^{hom} \|_{X_{\pm}^{r-1+\epsilon,\beta}} \lesssim \| D^{1-\epsilon} A_{\pm}(0)\|_{H^{r-1+\epsilon}} \quad \mbox{for} \, n=2 \, . $$

Our aim is to show the following local well-posedness result:
\begin{theorem}
\label{Theorem2}
1. If $n \ge 3$ assume 
$$ s > \frac{n}{2}-\frac{3}{4} \, , \, r > \frac{n}{2}-1 \, , \, s \ge r-1 \, , \, r \ge s-1 \, , \,2r-s > \frac{n-3}{2} \, , \,2s-r > \frac{n-1}{2} \,.$$
 Let $\phi_{\pm}(0) \in H^s$ and $D A_{\pm}(0) \in H^{r-1}$  be given. Then the system (\ref{3.1}),(\ref{3.2}) has a unique local solution
$$\phi_{\pm} \in X_{\pm}^{s,\frac{1}{2}+}[0,T] \, , \,D A_{\pm}^{hom} \in X_{\pm}^{r-1,1-\epsilon_0}[0,T] \, , \,A_{\pm}^{inh} \in X_{\pm}^{r,1-\epsilon_0}[0,T] \, ,$$  where $\epsilon_0 >0$ is sufficiently small. \\
2. If $n = 2$ assume 
$$ s \ge \frac{3}{4} \, , \, r > \frac{1}{4}\, , \, s \ge r-1 \, , \,r \ge s-1 \, , \, 2r-s > -\frac{1}{4} \,  , \, 2s-r > \frac{3}{4}\, .$$ 
 Let $\phi_{\pm}(0) \in H^s$ and $D^{1-\epsilon} A_{\pm}(0) \in H^{r-1+\epsilon}$  be given. Then the system (\ref{3.1}),(\ref{3.2}) has a unique local solution 
$$\phi_{\pm} \in X_{\pm}^{s,\frac{1}{2}+}[0,T] \, , \, D^{1-\epsilon} A_{\pm}^{hom} \in X_{\pm}^{r-1+\epsilon,\frac{3}{4}+}[0,T] \, ,  \, D^{\epsilon} A_{\pm}^{inh} \in X_{\pm}^{r-\epsilon,\frac{3}{4}}[0,T] \, , $$
where $\epsilon >0$ is sufficiently small.
\end{theorem}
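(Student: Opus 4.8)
\noindent The plan is a Picard iteration for the Duhamel formulation of the first-order system (3.1)--(3.2), carried out in the product space
$$Y:=X^{s,\frac12+}_{+}[0,T]\times X^{s,\frac12+}_{-}[0,T]\times\mathcal A_{+}[0,T]\times\mathcal A_{-}[0,T],$$
where $\mathcal A_{\pm}[0,T]$ is the space of $A_{\pm}=A^{hom}_{\pm}+A^{inh}_{\pm}$ with the norms stated in the theorem (so $DA^{hom}_{\pm}\in X^{r-1,1-\epsilon_0}_{\pm}$ and $A^{inh}_{\pm}\in X^{r,1-\epsilon_0}_{\pm}$ for $n\ge3$, and the $\epsilon$-shifted versions with $b=\frac34+$ for $n=2$). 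One defines $\Phi$ on a ball of $Y$ by solving the two families of linear half-wave equations: for $\phi_{\pm}$ with data (3.5a) and right-hand side $-(\pm 2\Lambda_m)^{-1}\mathcal M$, and for $A_{\pm}$ by taking $A^{hom}_{\pm}$ the free evolution of the data (3.5b), (3.5c) and $A^{inh}_{\pm}$ the Duhamel term with right-hand side $-(\pm 2D)^{-1}\mathcal N$ and zero data. Using the homogeneous estimates recalled before the theorem together with the standard inhomogeneous bound $\|u\|_{X^{\sigma,b}_{\pm}[0,T]}\lesssim T^{\theta}\|F\|_{X^{\sigma,b-1+\theta}_{\pm}[0,T]}$ (valid for $\frac12<b\le 1$, $0<\theta\le 1-b$), the fixed-point problem reduces to a finite list of multilinear estimates for $\mathcal M$ and $\mathcal N$, each of which must be proved with a spare factor $T^{\theta}$, $\theta>0$, so that $\Phi$ is a self-map of and a contraction on a small ball. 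Since $A^{hom}_{\pm}$ is a free wave it lies in $X^{\,\cdot,b}_{\pm}[0,T]$ for every $b$, so the only role of the splitting of $A_{\pm}$ is to record that the inhomogeneous part gains the extra spatial derivative coming from $(\pm 2D)^{-1}$; this is precisely what makes $r\le s$ possible.

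The bound for the Klein--Gordon nonlinearity $\mathcal M=A^{\mu}\partial_{\mu}\phi+A_{\mu}A^{\mu}\phi$, which we need in $X^{s-1,-\frac12+}_{\pm}$ (equivalently $(\pm 2\Lambda_m)^{-1}\mathcal M\in X^{s,-\frac12+}_{\pm}$), is where the null structure is used. For the quadratic piece I would substitute $\phi=\phi_{+}+\phi_{-}$, $A=A_{+}+A_{-}$ and invoke the pointwise frequency majorisation (Q1): in each summand one of the weights $\bar{D}_{\pm}^{\frac12-2\epsilon}$, $\bar{D}_{\pm_1}^{\frac12}$, $\bar{D}_{\pm_2}^{\frac12}$ produced by Lemma~\ref{Lemma} occurs, and one moves it either onto the output — where a $\bar{D}_{\pm}^{\frac12-2\epsilon}$ merely changes the target norm to $X^{s-1,-2\epsilon+}_{\pm}$ — or onto the factor $A_{\pm_1}$, resp.\ $\phi_{\pm_2}$, whose modulation index then drops by $\frac12$. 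What remains in every case is an honest bilinear estimate $\|uv\|_{X^{s',b'}}\lesssim\|u\|_{X^{s_1,b_1}}\|v\|_{X^{s_2,b_2}}$ with $b'$ just above $-\frac12$ and $b_1,b_2\in[0,1)$, i.e.\ exactly of the shape proved by d'Ancona--Foschi--Selberg (for $n\le3$, via \cite{AFS3}, \cite{AFS2}; for $n\ge4$ in the special case via Klainerman--Tataru \cite{KT}) and collected in Chapter~4. Tracking the exponent triples through the most demanding summand — the one where the derivative $D$ falls on $\phi$ and $A_{\pm_1}$ sits near its lowest allowed regularity, $r\approx\frac n2-1$ — is what reproduces the hypotheses $s>\frac n2-\frac34$, $2r-s>\frac{n-3}{2}$, $2s-r>\frac{n-1}{2}$ (and, for $n=2$, their stated analogues). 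The cubic term $A_{\mu}A^{\mu}\phi$ carries no derivative and is handled by applying a product estimate twice — first to $A_{\mu}A^{\mu}$, then against $\phi\in X^{s,\frac12+}_{\pm}$ — which uses $r>\frac n2-1$ so that the $A$-space multiplies into itself and into the $\phi$-space.

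For the Maxwell nonlinearity $\mathcal N=(\mathcal N_0,\mathcal N_j)$ \emph{no} null structure is available or needed: the antisymmetrised null forms (Q2)--(Q3) live in the equations for $F_{\mu\nu}$, not in those for $A_{\mu}$, and accepting the loss $s-r$ in $A$ is exactly what allows $(\pm 2D)^{-1}\mathcal N$ to be estimated crudely. Its quadratic part is schematically $\phi\,\overline{\partial_{\mu}\phi}$ (with $\Lambda_m$ for $\partial_0$ in $\mathcal N_0$), so after one $D^{-1}$ it reduces to a d'Ancona--Foschi--Selberg product estimate such as $\|uv\|_{X^{r-1,-\epsilon_0}}\lesssim\|u\|_{X^{s,\frac12+}}\|v\|_{X^{s-1,\frac12+}}$, admissible under the same inequalities; the cubic part $A_{\mu}|\phi|^2$ is two further product estimates. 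In $n=2$ this is genuinely borderline — the relevant estimate, resp.\ the weak exponent $-\epsilon_0$, just misses at $r=\frac14$, $s=\frac12$ — which is why one passes to the data spaces carrying the harmless $D^{-\epsilon}$, $D^{1-\epsilon}$ weights and to $b=\frac34+$ for $A$ (the latter buying more room for the $T^{\theta}$ gain), restoring a margin. With all multilinear bounds in place, each with a factor $T^{\theta}$, $\theta>0$, $\Phi$ maps a ball of radius comparable to the data norms into itself and contracts it for $T=T(\text{data})>0$ small; the fixed point is the claimed solution, uniqueness in $Y$ follows from the same estimates, and the fact that the pair really solves (3.1)--(3.2) with $A_{\pm}=A^{hom}_{\pm}+A^{inh}_{\pm}$ is immediate from the construction.

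The main obstacle I expect is the bookkeeping in the Klein--Gordon step: one must verify, summand by summand in (Q1), that after redistributing the null weights every exponent triple lies in the admissible region of the d'Ancona--Foschi--Selberg product theorems, and in particular that the single worst term — maximal derivative on $\phi$, minimal spatial regularity on $A$ — still closes; this is what pins down the threshold $s>\frac n2-\frac34$ and the companion conditions $2r-s>\frac{n-3}{2}$, $2s-r>\frac{n-1}{2}$. The separate, tighter $n=2$ endpoint and the $n\ge4$ estimate resting on \cite{KT} add casework but no new conceptual ingredient.
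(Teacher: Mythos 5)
Your proposal follows the same route as the paper's Section 5: reduce to a contraction in a product space by the Duhamel formulation; for $\mathcal M=A^{\mu}\partial_{\mu}\phi+A_{\mu}A^{\mu}\phi$ use the null-structure majorisation (Q1) of Section 2 to redistribute the modulation weights $\bar D^{1/2-2\epsilon}_{\pm}$, $\bar D^{1/2}_{\pm_i}$, thereby reducing to a handful of bilinear estimates of d'Ancona--Foschi--Selberg type (Theorem~\ref{Theorem3} for $n=2,3$, Proposition~\ref{Prop.3.6} for $n\ge4$) plus Sobolev multiplication; estimate $\mathcal N$ crudely without null structure since the output regularity $r$ is allowed to be lower than $s$; and treat the cubic terms by applying product estimates twice. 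This is the paper's scheme.

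Two small imprecisions worth flagging, since you identified "bookkeeping" as the main risk. First, the constraints are not all produced by the worst summand in the Klein--Gordon null-form: in the paper's ledger, $2s-r>\tfrac{n-1}{2}$ binds in the Maxwell quadratic term $D^{-1}(\phi\,\partial\phi)$ (Claim~3), $2r-s>\tfrac{n-3}{2}$ in the cubic $A_{\mu}A^{\mu}\phi$ (Claim~4), and $s>\tfrac n2-\tfrac34$ is actually invoked in the cubic Maxwell term $D^{-1}(A\phi\psi)$ (Claim~5) as well as in (C1.1)--(C1.2); so the $\mathcal N$ and cubic estimates are not strictly slack relative to the $A^{\mu}\partial_{\mu}\phi$ piece. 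Second, the reason for carrying $A$ at $b=1-\epsilon_0$ (resp.\ $\tfrac34+$ when $n=2$) is not mainly to buy a $T^{\theta}$ factor; it is used \emph{inside} the bilinear estimates. For instance, in estimate (C1.3) the paper explicitly exploits $DA\in H^{r-1,1-\epsilon_0}$ rather than $H^{r-1,\frac12+}$ so that after one modulation weight is spent on $A$ one still has $b_1=\tfrac12-\epsilon_0>0$ available, and similarly in $n=2$ the $\tfrac34+$ index is what lets the AFS conditions involving $b_0+b_1+b_2$ and $(s_0+s_1+s_2)+s_i+b_i$ close at $r$ near $\tfrac14$. These points do not change the viability of your plan, but they do affect which term you should single out as "the single worst one" when checking the thresholds.
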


\section{Preliminaries}

Fundamental for the proof of Theorem \ref{Theorem2} are the following bilinear estimates in wave-Sobolev spaces which were proven by d'Ancona, Foschi and Selberg in the cases $n=2$ in \cite{AFS2} and $n=3$ in \cite{AFS3} in a more general form which include many limit cases which we do not need.
\begin{theorem}
\label{Theorem3}
Let $n=2$ or $n=3$. The estimates
$$\|uv\|_{X_{\pm}^{-s_0,-b_0}} \lesssim \|u\|_{X^{s_1,b_1}_{\pm_1}} \|v\|_{X^{s_2,b_2}_{\pm_2}} $$
and 
$$\|uv\|_{H^{-s_0,-b_0}} \lesssim \|u\|_{H^{s_1,b_1}} \|v\|_{XH{s_2,b_2}} $$
hold, provided the following conditions hold:
\begin{align*}
\nonumber
& b_0,b_1,b_2 \ge 0 \\
\nonumber
& b_0 + b_1 + b_2 > \frac{1}{2} \\
\nonumber
& b_0 + b_1 > 0 \\
\nonumber
& b_0 + b_2 > 0 \\
\nonumber
& b_1 + b_2 > 0 \\
\nonumber
&s_0+s_1+s_2 > \frac{n+1}{2} -(b_0+b_1+b_2) \\
\nonumber
&s_0+s_1+s_2 > \frac{n}{2} -(b_0+b_1) \\
\nonumber
&s_0+s_1+s_2 > \frac{n}{2} -(b_0+b_2) \\
\nonumber
 &s_0+s_1+s_2 > \frac{n}{2} -(b_1+b_2)\\
\end{align*}
\begin{align}
\nonumber
&s_0+s_1+s_2 > \frac{n-1}{2} - b_0 \\
\nonumber
&s_0+s_1+s_2 > \frac{n-1}{2} - b_1 \\
\nonumber
&s_0+s_1+s_2 > \frac{n-1}{2} - b_2 \\
\nonumber
&s_0+s_1+s_2 > \frac{n+1}{4} \\
 \label{5.13}
&(s_0 + b_0) +2s_1 + 2s_2 > \frac{n}{2} \\
\nonumber
&2s_0+(s_1+b_1)+2s_2 > \frac{n}{2} \\
\nonumber
&2s_0+2s_1+(s_2+b_2) > \frac{n}{2} \\
\nonumber
&s_1 + s_2 \ge 0 \\
\nonumber
&s_0 + s_2 \ge 0 \\
\nonumber
&s_0 + s_1 \ge 0 \, .
\end{align}
If $n=3$ the condition (\ref{5.13}) is only necessary in the case when $\langle \xi_0 \rangle \lesssim \langle \xi_1 \rangle \sim \langle \xi_2 \rangle$ and also $\pm_1$ and $\pm_2$ are different signs. Here $\xi_0$,$\xi_1$ and $\xi_2$ denote the spatial frequencies of $\widehat{uv}$,$\widehat{u}$ and $\widehat{v}$, respectively.
\end{theorem}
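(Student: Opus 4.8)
The plan is Fourier-analytic and dyadic, following the scheme of Foschi--Klainerman and d'Ancona--Foschi--Selberg. First I would dualize: by Plancherel and the substitutions $u=\Lambda^{-s_1}\bar{D}_{\pm_1}^{-b_1}u_1$, $v=\Lambda^{-s_2}\bar{D}_{\pm_2}^{-b_2}u_2$, the asserted bound is equivalent to the trilinear convolution estimate
\[
\Big|\int_{\substack{\tau_0=\tau_1+\tau_2\\ \xi_0=\xi_1+\xi_2}} \frac{\langle\xi_0\rangle^{-s_0}\langle\xi_1\rangle^{-s_1}\langle\xi_2\rangle^{-s_2}\,\widehat{u_0}\,\widehat{u_1}\,\widehat{u_2}}{\langle\tau_0\pm|\xi_0|\rangle^{b_0}\langle\tau_1\pm_1|\xi_1|\rangle^{b_1}\langle\tau_2\pm_2|\xi_2|\rangle^{b_2}}\Big|\lesssim \|u_0\|_{L^2}\|u_1\|_{L^2}\|u_2\|_{L^2},
\]
where one may assume $\widehat{u_i}\ge 0$. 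The $H^{s,b}$ statement reduces to this one after splitting each factor into its $\tau_i>0$ and $\tau_i<0$ parts, so that $|\tau_i|-|\xi_i|$ becomes $\tau_i\mp_i|\xi_i|$.

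Next I would perform a dyadic decomposition $\langle\xi_i\rangle\sim N_i$, $\langle\tau_i\pm_i|\xi_i|\rangle\sim L_i$ (with $\pm_0=\pm$). The algebraic heart is the resonance identity: on $\tau_0=\tau_1+\tau_2$,
\[
(\tau_0\pm|\xi_0|)-(\tau_1\pm_1|\xi_1|)-(\tau_2\pm_2|\xi_2|)=\pm|\xi_0|\mp_1|\xi_1|\mp_2|\xi_2|=:\mathfrak{h},
\]
so that $\max(L_0,L_1,L_2)\gtrsim|\mathfrak{h}|$, while the elementary trigonometry underlying Lemma \ref{Lemma} bounds $|\mathfrak{h}|$ from below by $\min(|\xi_1|,|\xi_2|)$ times the square of the angle $\angle(\pm_1\xi_1,\pm_2\xi_2)$ in the hardest configurations; this is what ties the output and input modulations to transversality. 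I would then invoke the basic $L^2$ bilinear estimate for the wave cone, namely, for $f,g$ localized to spatial frequencies $N_1,N_2$ and modulations $L_1,L_2$,
\[
\|P_{N_0}Q_{L_0}(fg)\|_{L^2}\lesssim \big(\min(N_0,N_1,N_2)\,\min(L_0,L_1,L_2)\big)^{1/2}\,(\text{curvature/Strichartz gain})^{1/2}\,\|f\|_{L^2}\|g\|_{L^2},
\]
obtained by estimating the volume of intersection of two thickened characteristic cones, together with its refinements: the $L^4_{t,x}$ Strichartz bound (valid for $n=2,3$) and, crucially, the sharp Klainerman--Tataru bilinear estimate \cite{KT} for high-high-to-low interactions.

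Feeding these estimates into the trilinear integral turns it into a sum over the dyadic parameters $N_0,N_1,N_2,L_0,L_1,L_2$. I would split into the standard regimes --- high-high ($N_1\sim N_2\gtrsim N_0$) versus high-low ($N_0\sim N_1\gtrsim N_2$ and its mirror image) and, within each, according to which of $L_0,L_1,L_2$ dominates --- and check that in each regime the geometric gains above, combined with the stated exponent inequalities, make the sum geometric; the strict inequalities are precisely the summability thresholds, and the rows of conditions correspond one-to-one to these case distinctions.

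The main obstacle I anticipate is the case $N_0\lesssim N_1\sim N_2$ with opposite signs $\pm_1\ne\pm_2$ and small output modulation $L_0$: here the two input cones are nearly tangent, the crude volume bound is lossy, and one must instead use the improved angular estimate \eqref{angle2} (equivalently Klainerman--Tataru) together with a decomposition into angular sectors (or cubes) on which the interaction becomes transversal. This is exactly the configuration in which condition \eqref{5.13} is needed when $n=3$. Carrying out this block decomposition sharply, and verifying that it closes under all the listed hypotheses, is the bulk of the argument; the complete proof in the full (more general) form is in \cite{AFS2} for $n=2$ and \cite{AFS3} for $n=3$, and here only the stated, slightly restricted, version is required.
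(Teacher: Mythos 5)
The paper does not prove Theorem~\ref{Theorem3} at all: it is quoted directly from d'Ancona--Foschi--Selberg, with \cite{AFS2} cited for $n=2$ and \cite{AFS3} for $n=3$, so there is no ``paper's own proof'' to compare against. Your sketch is therefore best judged as a description of the method used in those references, and in that capacity it is accurate: dualization to a trilinear convolution estimate with nonnegative Fourier data, dyadic decomposition in $\langle\xi_i\rangle\sim N_i$ and $\langle\tau_i\pm_i|\xi_i|\rangle\sim L_i$, the resonance identity $\max L_i\gtrsim |\pm|\xi_0|\mp_1|\xi_1|\mp_2|\xi_2||$ and its link to the angle, the $L^2$ bilinear cone-intersection estimate and its Strichartz and Klainerman--Tataru refinements, and finally the identification of the opposite-sign high-high-to-low regime with small output modulation as the case forcing condition \eqref{5.13} and the angular sector decomposition. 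These are indeed the structural ingredients of \cite{AFS2,AFS3}.

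That said, what you have written is an honest outline, not a proof: the bulk of the work in the cited references is the exhaustive case analysis showing that each of the roughly twenty listed exponent inequalities is exactly the summability threshold for one dyadic regime, together with the sharp block decomposition in the nearly-tangent-cone case and the treatment of the various ``limit'' configurations. You acknowledge this by deferring to \cite{AFS2,AFS3}, which is the same thing the paper does. So the proposal is consistent with the paper in spirit (both ultimately invoke the same external result) but does not itself supply a self-contained argument, and none was expected by the paper either.
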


Next, we want to prove a special case of these bilinear estimate, which holds in higher dimensions.
We start by recalling the Strichartz type estimates for the wave equation.
\begin{prop}
\label{Str}
If $n \ge 2$ and
\begin{equation}
2 \le q \le \infty, \quad
2 \le r < \infty, \quad
\frac{2}{q} \le (n-1) \left(\half - \frac{1}{r} \right), 
\end{equation}
then the following estimate holds:
$$ \|u\|_{L_t^q L_x^r} \lesssim \|u\|_{H^{\frac{n}{2}-\frac{n}{r}-\frac{1}{q},\half+}} \, .$$
\end{prop}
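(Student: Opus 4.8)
\emph{Proof strategy.} Since $\langle|\tau|-|\xi|\rangle\sim\langle\tau-|\xi|\rangle$ on $\{\tau\ge0\}$ and $\langle|\tau|-|\xi|\rangle\sim\langle\tau+|\xi|\rangle$ on $\{\tau\le0\}$, a smooth splitting of $\widehat u$ into the half-spaces $\pm\tau\ge0$ (the overlap $|\tau|\lesssim1$, where all three weights are comparable, being harmless) reduces the claim to the estimate
$$\|u\|_{L^q_tL^r_x}\lesssim\|u\|_{X^{\gamma,b}_\pm}\, ,\qquad \gamma:=\frac n2-\frac nr-\frac1q\, ,\quad b=\half+\, ,$$
for each choice of sign. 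I would prove this by the transfer principle. Set $v(t):=e^{\pm it|D|}u(t)$; then the substitution $\sigma=\tau\pm|\xi|$ gives $\|\langle\sigma\rangle^{b}\widehat v(\sigma,\cdot)\|_{L^2_\sigma H^\gamma_x}=\|u\|_{X^{\gamma,b}_\pm}$, and inverting the time Fourier transform yields
$$u(t)=\int_{\mathbb R}e^{it\sigma}\,e^{\mp it|D|}v_\sigma\,d\sigma\, ,$$
where $v_\sigma$ is the function whose spatial Fourier transform is $\widehat v(\sigma,\cdot)$. Taking $L^q_tL^r_x$ norms, pulling the $\sigma$-integral out by Minkowski's inequality (valid since $q,r\ge2$) and using $|e^{it\sigma}|=1$,
$$\|u\|_{L^q_tL^r_x}\le\int_{\mathbb R}\big\|e^{\mp it|D|}v_\sigma\big\|_{L^q_tL^r_x}\,d\sigma\, .$$

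Next I would invoke the free-wave Strichartz estimate $\|e^{\mp it|D|}f\|_{L^q_tL^r_x}\lesssim\|f\|_{\dot H^\gamma}$, which holds exactly under the stated admissibility conditions. For $2<q<\infty$ this is the classical estimate of Strichartz, Ginibre--Velo and Lindblad--Sogge. The endpoint $q=2$ can, by the admissibility condition, occur only for $n\ge4$ (for $n=3$ it forces $r=\infty$, which is excluded, and for $n=2$ it is empty), and there it is the Keel--Tao estimate at $r=\frac{2(n-1)}{n-3}$, extended to larger $r$ by Sobolev embedding in the space variable. The case $q=\infty$ is just Sobolev embedding $\dot H^\gamma_x\hookrightarrow L^r_x$ together with the conservation $\|e^{\mp it|D|}f\|_{\dot H^\gamma_x}=\|f\|_{\dot H^\gamma_x}$. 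A one-line computation from $\frac2q\le(n-1)(\frac12-\frac1r)$ gives
$$\gamma\ge\frac n2-\frac nr-\frac{n-1}{2}\Big(\half-\frac1r\Big)=\frac{n+1}{4}\Big(1-\frac2r\Big)\ge0\, ,$$
so $\|f\|_{\dot H^\gamma}\le\|f\|_{H^\gamma}$ and hence the inhomogeneous free estimate $\|e^{\mp it|D|}f\|_{L^q_tL^r_x}\lesssim\|f\|_{H^\gamma}$ holds as well. Inserting this into the last display and applying Cauchy--Schwarz in $\sigma$ (using $2b>1$),
$$\|u\|_{L^q_tL^r_x}\lesssim\int_{\mathbb R}\|v_\sigma\|_{H^\gamma}\,d\sigma\le\Big(\int\langle\sigma\rangle^{-2b}\,d\sigma\Big)^{1/2}\Big(\int\langle\sigma\rangle^{2b}\|v_\sigma\|_{H^\gamma}^2\,d\sigma\Big)^{1/2}\lesssim\|u\|_{X^{\gamma,b}_\pm}\, ,$$
which is the required estimate.

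The only genuinely non-elementary input is the free-wave Strichartz estimate itself, and within it the Keel--Tao endpoint in dimensions $n\ge4$; the remaining ingredients --- the reduction $H^{s,b}\hookrightarrow X^{s,b}_++X^{s,b}_-$, the transfer principle, and the passage from homogeneous to inhomogeneous Sobolev norms (which is automatic because $\gamma\ge0$) --- are routine. I therefore expect the write-up to amount essentially to quoting the classical Strichartz estimates and carrying out the transfer argument above.
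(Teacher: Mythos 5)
Your proposal is correct and follows essentially the same route the paper takes, which simply cites Ginibre--Velo, Prop.\ 2.1, combined with the transfer principle; you are just spelling out both ingredients (the $H^{s,b}\hookrightarrow X^{s,b}_++X^{s,b}_-$ splitting, the Fourier superposition/Cauchy--Schwarz transfer argument, and the check $\gamma\ge0$). The one substantive observation you add is that the $q=2$ case, which occurs only for $n\ge4$, requires the Keel--Tao endpoint rather than Ginibre--Velo alone, a point the paper's one-line citation elides.
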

\begin{proof}
A proof can be found for e.g. in \cite{GV}, Prop. 2.1, which is combined with the transfer principle.
\end{proof}
The following proposition was proven by \cite{KT}.
\begin{prop}
\label{TheoremE}
Let $n \ge 2$, and let $(q,r)$ satisfy:
$$
2 \le q \le \infty, \quad
2 \le r < \infty, \quad
\frac{2}{q} \le (n-1) \left(\half - \frac{1}{r} \right)
$$
Assume that
\begin{gather*}
0 < \sigma < n - \frac{2n}{r} - \frac{4}{q}, \\
s_1, s_2 < \frac{n}{2} - \frac{n}{r} - \frac{1}{q}, \\
s_1 + s_2 + \sigma = n - \frac{2n}{r} - \frac{2}{q}. \end{gather*} 
then
$$
\|D^{-\sigma}(uv)\|_{L_t^{q/2} L_x^{r/2}}
\lesssim
\|u\|_{H^{s_1,\half+}}
\|v\|_{H^{s_2,\half+}} \, .
$$
\end{prop}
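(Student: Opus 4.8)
The plan is to get rid of the singular factor $D^{-\sigma}$ by a Hardy--Littlewood--Sobolev inequality in the space variables, which converts it into a shift of Lebesgue exponent, and then to estimate the remaining product of two wave-type factors by a bilinear H\"older inequality followed by the Strichartz estimate of Proposition \ref{Str} applied separately to $u$ and to $v$. In particular no Fourier-analytic bilinear estimate for the cone is needed, nor does one have to pass through free waves, since Proposition \ref{Str} is already phrased in the spaces $H^{s,\half+}$; the hypotheses will enter only through the exponent bookkeeping. First I would set $\frac1p:=\frac2r+\frac\sigma n$; from $0<\sigma<n$ and $\sigma<n-\frac{2n}r-\frac4q$ one gets $\frac2r<\frac1p<1-\frac4{qn}<1$, hence $1<p<\frac r2<\infty$ (in particular $r>2$ whenever the hypotheses can hold, since $\frac2r<\frac1p<1$), so the Hardy--Littlewood--Sobolev inequality yields $\|D^{-\sigma}h(t)\|_{L^{r/2}_x}\lesssim\|h(t)\|_{L^p_x}$ for each fixed $t$. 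Taking the $L^{q/2}_t$-norm reduces the claim to $\|uv\|_{L^{q/2}_tL^p_x}\lesssim\|u\|_{H^{s_1,\half+}}\|v\|_{H^{s_2,\half+}}$.

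Next I would choose time and space exponents $q_1,q_2,p_1,p_2$ with $\frac1{q_1}+\frac1{q_2}=\frac2q$ and $\frac1{p_1}+\frac1{p_2}=\frac1p$, so that H\"older's inequality gives $\|uv\|_{L^{q/2}_tL^p_x}\le\|u\|_{L^{q_1}_tL^{p_1}_x}\|v\|_{L^{q_2}_tL^{p_2}_x}$, and then apply Proposition \ref{Str} to each factor. This works provided one can simultaneously arrange the derivative-count identities $\frac n2-\frac n{p_i}-\frac1{q_i}=s_i$ and the admissibility conditions $2\le q_i\le\infty$, $2\le p_i<\infty$, $\frac2{q_i}\le(n-1)(\half-\frac1{p_i})$ for $i=1,2$. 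Adding the two derivative-count identities and substituting the H\"older relations produces exactly $\frac{2n}r+\sigma+\frac2q=n-s_1-s_2$, i.e.\ the hypothesis $s_1+s_2+\sigma=n-\frac{2n}r-\frac2q$; hence the four linear equations on $(\frac1{q_1},\frac1{q_2},\frac1{p_1},\frac1{p_2})$ have rank three and leave a one-parameter family of solutions, which I would parametrise by $\theta:=\frac1{p_1}$, obtaining $\frac1{q_1}=\frac n2-s_1-n\theta$, $\frac1{q_2}=\frac2q-\frac n2+s_1+n\theta$, $\frac1{p_2}=\frac1p-\theta$.

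The only genuine obstacle is then to show that the remaining inequality constraints --- namely $0\le\frac1{q_i}\le\half$, $0<\frac1{p_i}\le\half$, and the two wave-admissibility inequalities --- admit a common value of $\theta$. Each of them confines $\theta$ to an explicit interval (for instance $0\le\frac1{q_1}\le\half$ alone forces $\theta\in[\,\half-\frac{s_1}n-\frac1{2n},\ \half-\frac{s_1}n\,]$), and one has to check that the resulting finite intersection is nonempty. This is elementary but slightly fiddly interval arithmetic, and it is exactly here that the strict inequalities $s_i<\frac n2-\frac nr-\frac1q$ and $0<\sigma<n-\frac{2n}r-\frac4q$, together with the admissibility $\frac2q\le(n-1)(\half-\frac1r)$ of the pair $(q,r)$ itself, are used: they guarantee that the candidate interval for $\theta$ has nonempty interior instead of degenerating to a point or the empty set. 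Fixing any admissible $\theta$ and invoking Proposition \ref{Str} on $u$ and on $v$ then closes the estimate.
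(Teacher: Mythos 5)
The paper does not give its own proof of Proposition \ref{TheoremE}: it is cited from Klainerman and Tataru \cite{KT}, where it appears as a genuinely \emph{bilinear} Strichartz-type estimate. Your route of Hardy--Littlewood--Sobolev followed by H\"older and two separate applications of the \emph{linear} Strichartz estimate (Proposition \ref{Str}) cannot reach the full stated range of exponents, and the obstruction sits exactly in the ``fiddly interval arithmetic'' you defer. With $\theta=\frac{1}{p_1}$ and the two derivative-count identities, the wave-admissibility conditions $\frac{2}{q_i}\le(n-1)\bigl(\half-\frac{1}{p_i}\bigr)$ become
\begin{equation*}
\theta \ \ge\ \half - \frac{2s_1}{n+1}\,, \qquad
\theta \ \le\ \frac1p - \half + \frac{2s_2}{n+1}\,,
\end{equation*}
and substituting $s_1+s_2=n-\frac{n}{p}-\frac{2}{q}$ shows these are compatible only if
\begin{equation*}
\frac1p \ \le\ 1 - \frac{4}{q(n-1)}\,, \qquad\mbox{equivalently}\qquad \sigma \ \le\ n - \frac{2n}{r} - \frac{4n}{q(n-1)}\,.
\end{equation*}
Since $\frac{4n}{q(n-1)}>\frac{4}{q}$ for every finite $q$, this is strictly stronger than the stated hypothesis $\sigma<n-\frac{2n}{r}-\frac{4}{q}$; and one cannot escape by wasting derivatives, since summing the two derivative counts forces equality with $s_1+s_2$. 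The gap matters for this paper: Proposition \ref{TheoremE} is invoked in the proof of Proposition \ref{Prop.3.6} with $q=r=4$ and $\sigma$ up to $\frac{n}{2}-1$, where $\frac2r+\frac{\sigma}{n}+\frac{4}{q(n-1)}$ approaches $1+\frac{1}{n-1}-\frac{1}{n}>1$. A concrete failure of your scheme: take $n=3$, $q=r=4$, $\sigma=0.3$, $s_1=s_2=0.35$ (all hypotheses of the proposition hold). Then $\frac1p=0.6$, while the derivative count $\frac{3}{p_i}+\frac{1}{q_i}=1.15$ together with admissibility $\frac{1}{q_i}+\frac{1}{p_i}\le\half$ forces $\frac{1}{p_i}\ge 0.325$ for both $i$, contradicting $\frac{1}{p_1}+\frac{1}{p_2}=0.6$.

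Conceptually, the Klainerman--Tataru estimate is stronger than any product of two admissible linear Strichartz bounds precisely because $D^{-\sigma}$ weights the low output frequencies, and in the convolution a low output frequency corresponds to nearly antiparallel input frequencies of $u$ and $v$; the resulting transversality gain has to be captured on the Fourier side by a genuinely bilinear argument. Splitting $uv$ by H\"older in physical space discards that interaction entirely, which is why the exponent bookkeeping refuses to close.
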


The following product estimate for wave-Sobolev spaces is a special case of the very convenient much more general atlas formulated by \cite{AFS} in arbitrary dimension, but proven only in the case $1\le n \le 3$. (\cite{AFS} and \cite{AFS1}). Therefore we have to give a proof.

\begin{prop}
\label{Prop.3.6}
Assume $n \ge 3$ and
$$s_0+s_1+s_2 > \frac{n-1}{2} \, , \, (s_0+s_1+s_2)+s_1+s_2 > \frac{n}{2} \, , \, s_0+s_1 \ge 0 \, , \, s_0+s_2 \ge 0 \, , \, s_1+s_2 \ge 0 \, . $$
The following estimate holds:
$$ \|uv\|_{H^{-s_0,0}} \lesssim \|u\|_{H^{s_1,\half+}} \|v\|_{H^{s_2,\half+}} \, . $$
\end{prop}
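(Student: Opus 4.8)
The plan is to deduce Proposition~\ref{Prop.3.6} from the Strichartz-type estimate of Proposition~\ref{Str} together with the bilinear $L^2$ (i.e. $L^{q/2}_t L^{r/2}_x$ with $q=r=4$) estimate of Proposition~\ref{TheoremE}, via a dyadic decomposition in the output frequency and in the modulation (or, equivalently, in the two input frequencies). First I would reduce to the case where $\widehat u,\widehat v \ge 0$, so that $\|uv\|_{H^{-s_0,0}}$ is comparable to $\big\| \Lambda^{-s_0}\!\big((\Lambda^{s_1}u)(\Lambda^{s_2}v)\big)\big\|$-type quantities after moving the weights $\langle\xi\rangle^{s_1},\langle\xi\rangle^{s_2}$ onto $u,v$; one must keep track of how $\langle\xi_0\rangle^{-s_0}$ relates to $\langle\xi_1\rangle^{-s_1}\langle\xi_2\rangle^{-s_2}$, which splits into the standard three frequency interaction regimes: high-low ($\langle\xi_1\rangle\sim\langle\xi_0\rangle\gg\langle\xi_2\rangle$ and symmetric) and high-high ($\langle\xi_1\rangle\sim\langle\xi_2\rangle\gtrsim\langle\xi_0\rangle$). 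In the high-low cases the sum $s_0+s_1+s_2$ is not the binding constraint and one can place both factors in dual Strichartz/energy spaces via Hölder in $(t,x)$, using $\|u\|_{L^\infty_t L^2_x}\lesssim\|u\|_{H^{0,\half+}}$ for the low-frequency factor and Proposition~\ref{Str} for the high one; here the conditions $s_0+s_1\ge 0$, $s_0+s_2\ge 0$ (and $s_1+s_2\ge0$) guarantee no negative powers are misplaced.

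The main work is the high-high regime $\langle\xi_1\rangle\sim\langle\xi_2\rangle=:N\gtrsim\langle\xi_0\rangle=:L$. Here I would dyadically localize $u$ at frequency $N$, $v$ at frequency $N$, and the product at frequency $L\le N$, and estimate $\|P_L(P_N u \cdot P_N v)\|_{L^2_{tx}}$. The natural tool is Proposition~\ref{TheoremE} with $q=r=4$ (legitimate for $n\ge 3$ since then $\tfrac12=\tfrac2q\le (n-1)(\tfrac12-\tfrac1r)=\tfrac{n-1}{4}$), which gives
$$
\|D^{-\sigma}(uv)\|_{L^2_t L^2_x}\lesssim \|u\|_{H^{\sigma_1,\half+}}\|v\|_{H^{\sigma_2,\half+}},\qquad \sigma_1+\sigma_2+\sigma = \frac{n-1}{2},
$$
with $\sigma_1,\sigma_2<\tfrac{n-2}{4}$ and $0<\sigma<\tfrac{n-1}{2}$. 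Choosing $\sigma$ close to but below $s_0+s_1+s_2-(s_1+s_2)\cdots$ — more precisely, matching exponents so that $\sigma_1=s_1-\delta$, $\sigma_2=s_2-\delta$ and $\sigma$ absorbs the remaining $\tfrac{n-1}{2}-s_1-s_2+2\delta$ worth of derivatives, which is $\le s_0$ exactly when $s_0+s_1+s_2>\tfrac{n-1}{2}$ — yields the claim provided the admissibility constraints $\sigma_i<\tfrac{n-2}{4}$ and $\sigma>0$ can be met. The constraint $\sigma>0$ is again $s_0+s_1+s_2>\tfrac{n-1}{2}$. When $\sigma_1$ or $\sigma_2$ would be forced $\ge\tfrac{n-2}{4}$ (i.e. $s_i$ large), that factor's frequency is being controlled with room to spare and one instead uses a plain Strichartz estimate on it and an $L^2$-Bernstein/energy bound on the other, summing the dyadic pieces using the extra decay; this is where the second hypothesis $(s_0+s_1+s_2)+s_1+s_2>\tfrac n2$ enters, to make the dyadic sum over $N$ (and over $L\le N$) converge.

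The step I expect to be the genuine obstacle is the bookkeeping that shows the dyadic sums over the output frequency $L$ and the common input frequency $N$ are summable under the stated \emph{strict} inequalities, and in particular the borderline case where one wants to put derivatives on the output factor (the $D^{-\sigma}$ in Proposition~\ref{TheoremE}) rather than splitting symmetrically. Concretely: after localization, Proposition~\ref{TheoremE} produces a bound like $N^{-(s_1+s_2)+\text{(loss)}}L^{s_0}$ per block, and one needs both the $\delta$-losses from using $\tfrac12+$ rather than $\tfrac12$ in the modulation weights, and the gap in the two displayed hypotheses, to be spendable on $\sum_{L\le N} \sum_N$. I would organize this by fixing the relation $s_0+s_1+s_2=\tfrac{n-1}{2}+2\delta_0$ for small $\delta_0>0$ and $(s_0+s_1+s_2)+s_1+s_2=\tfrac n2+2\delta_0'$, then distributing $\delta_0,\delta_0'$ among the three exponents $\sigma,\sigma_1,\sigma_2$ and the Bernstein gains; the remaining estimates ($L^\infty_t L^2_x$ Hölder pairings in the high-low cases, and the Cauchy–Schwarz in the $\pm$ and dyadic indices) are routine. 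I would also remark that the hypothesis excludes $n\le 2$ only because the endpoint $q=r=4$ Strichartz pair fails there, which is exactly why Theorem~\ref{Theorem3} is invoked separately for $n=2,3$.
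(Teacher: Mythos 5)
Your plan matches the paper's proof almost exactly: a high-low/high-high frequency split, with the Strichartz estimate (Proposition~\ref{Str}) via H\"older and bilinear interpolation for the high-low interactions, and the Klainerman--Tataru estimate (Proposition~\ref{TheoremE}) with $q=r=4$ for the high-high regime $|\xi_0|\ll|\xi_1|\sim|\xi_2|$, where --- as the paper notes --- the constraint $s_1,s_2<\tfrac{n-1}{4}$ (not $\tfrac{n-2}{4}$ as you wrote) is moot since derivatives can be shifted freely between the two factors. The one place you are substantially vaguer than the paper is the genuine edge case $s_0\ge\tfrac n2-1$, where the constraint $\sigma<\tfrac n2-1$ in Proposition~\ref{TheoremE} bites: the paper resolves it not by the dyadic Bernstein bookkeeping you sketch but by bilinear interpolation between the Sobolev endpoint $\|uv\|_{H^{-n/2-,0}}\lesssim\|u\|_{H^{0,\half+}}\|v\|_{H^{0,\half+}}$ and a Klainerman--Tataru instance with $s_1=s_2=\tfrac14+$, then redistributes derivatives using $|\xi_1|\sim|\xi_2|$ and the second hypothesis $(s_0+s_1+s_2)+s_1+s_2>\tfrac n2$, which is the cleaner route to the same end.
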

\begin{proof}
We only consider the case $n \ge 4$ , because the case $n=3$ follows similarly and is contained in Theorem \ref{Theorem3}.
We have to prove
\begin{align*}
I & := \int_* \frac{\widehat{u}_1(\xi_1,\tau_1)}{\angles{\xi_1}^{s_1} \angles{|\xi_1|-|\tau_1|}^{\half+}}  \frac{\widehat{u}_2(\xi_2,\tau_2)}{\angles{\xi_2}^{s_2} \angles{|\xi_2|-|\tau_2|}^{\half+}}  \frac{\widehat{u}_0(\xi_0,\tau_0)}{\angles{\xi_0}^{s_0}} \lesssim \|u_1\|_{L^2_{xt}}
\|u_2\|_{L^2_{xt}} \, .
\end{align*}
Here * denotes integration over $\xi_0+\xi_1+\xi_2=0 $ and $\tau_0+\tau_1+\tau_2=0$ . Remark, that we may assume that the Fourier transforms are nonnegative. We consider different regions. \\
1. If $|\xi_0| \sim |\xi_1| \gtrsim |\xi_2|$ and $s_2 \ge 0$, we obtain
\begin{align*}
I \sim \int_* \frac{\widehat{u}_1(\xi_1,\tau_1)}{\angles{\xi_1}^{s_1+s_0} \angles{|\xi_1|-|\tau_1|}^{\half+}}  \frac{\widehat{u}_2(\xi_2,\tau_2)}{\angles{\xi_2}^{s_2} \angles{|\xi_2|-|\tau_2|}^{\half+}}  \widehat{u}_0(\xi_0,\tau_0) \, .
\end{align*}
Thus we have to show
$$ \|uv\|_{L^2_{xt}} \lesssim \|u\|_{H^{s_1+s_0,\half+}} \|v\|_{H^{s_1,\half+}} \, . $$
By Prop. \ref{Str} we obtain
$$ \|uv\|_{L^2_{xt}} \lesssim \|u\|_{L^{\infty}_t L^2_x} \|v\|_{L^2_t L^{\infty}_x} \lesssim \|u\|_{H^{0,\half+}} \|v\|_{H^{\frac{n-1}{2}+,\half+}} $$
and also
$$ \|uv\|_{L^2_{xt}} \lesssim  \|u\|_{H^{\frac{n-1}{2}+,\half+}}      \|v\|_{H^{0,\half+}} \, .$$
Bilinear interpolation gives for $0 \le \theta \le 1 $ :
$$ \|uv\|_{L^2_{xt}} \lesssim \|u\|_{H^{\frac{n-1}{2}(1-\theta)+,\half+}} \|v\|_{H^{\frac{n-1}{2}\theta+,\half+}}  \, , $$
so that 
$$   \|uv\|_{L^2_{xt}} \lesssim \|u\|_{H^{s_1+s_0,\half+}} \|v\|_{H^{s_2,\half+}} \, , $$
if $ s_0+s_1+s_2 > \frac{n-1}{2} $ and $s_1+s_0 \ge 0$ .\\
2. If $|\xi_0| \sim |\xi_2| \gtrsim |\xi_1|$ and $s_1 \ge 0$ , we obtain similarly
$$  \|uv\|_{L^2_{xt}} \lesssim \|u\|_{H^{s_2+s_0,\half+}} \|v\|_{H^{s_1,\half+}} \, , $$
if $ s_0+s_1+s_2 > \frac{n-1}{2} $ and $s_2+s_0 \ge 0$ .\\
3. If $|\xi_1| \ge |\xi_2|$ , $s_0 \le 0$ and $s_2 \ge 0$, we have $|\xi_0| \lesssim |\xi_1| $ , so that
$ \angles{\xi_0}^{-s_0} \lesssim \angles{\xi_1}^{-s_0}$ and we obviously obtain the same result as in 1. \\
4. If $|\xi_1| \le |\xi_2|$ , $s_0 \le 0$ and $s_1 \ge 0$ , we obtain the same result as in 2. \\
5.  If $|\xi_0| \sim |\xi_1| \gtrsim |\xi_2|$ and $s_2 \le 0$ we obtain
$$ I \lesssim \int_* \frac{\widehat{u}_1(\xi_1,\tau_1)}{\angles{\xi_1}^{s_0+s_1+s_2} \angles{|\xi_1|-|\tau_1|}^{\half+}}  \frac{\widehat{u}_2(\xi_2,\tau_2)}{\angles{|\xi_2|-|\tau_2|}^{\half+}}  \widehat{u}_0(\xi_0,\tau_0)  \lesssim \|u_1\|_{L^2_{xt}} \|u_2\|_{L^2_{xt}} \, , $$
because under our asumption $s_0+s_1+s_2 > \frac{n-1}{2}$ we obtain by Prop. \ref{Str}:
$$ \|uv\|_{L^2_{xt}} \le \|u\|_{L^2_t L^{\infty}_x} \|v\|_{L^{\infty}_x L^2_t} \lesssim \| u \|_{H^{s_0+s_1+s_2,\half+}} \|v\|_{H^{0,\half+}} \, . $$
6. If $|\xi_1| \ge |\xi_2|$ , $s_2 \le 0$ and $s_0 \le 0$ , or \\
7. If $|\xi_0| \sim |\xi_2| \ge |\xi_1|$ and $s_1 \le 0$ , or \\
8. If $|\xi_1| \le |\xi_2|$ , $s_1 \le 0$ and $s_0 \le 0$ , the same argument applies.

Thus we are done, if $s_0 \le 0$ , and also, if $s_0 \ge 0$ , and $|\xi_0| \sim |\xi_2 | \ge |\xi_1|$ or $ |\xi_0| \sim |\xi_1 | \ge |\xi_2|$ . 

It remains to consider the following case: $|\xi_0| \ll |\xi_1| \sim |\xi_2|$ and $s_0 > 0$ . We apply Prop. \ref{TheoremE} which gives
$$ \|uv\|_{H^{-s_0,0}} \lesssim \|u\|_{H^{s_1,\half+}} \|v\|_{H^{s_2,\half+}} \, , $$
under the conditions $0 < s_0 < \frac{n}{2} - 1 $ , $s_0+s_1+s_2 = \frac{n-1}{2} $ and $s_1,s_2 < \frac{n-1}{4}$ . The last condition is not necessary in our case $|\xi_1| \sim |\xi_2|$ . Remark that this implies $s_1+s_2 > \half$ , so that $s_0+s_1+s_2+s_1+s_2 > \frac{n}{2}$ .  , The second condition can now be replaced by $s_0+s_1+s_2 \ge \frac{n-1}{2}$ , because we consider inhomogeneous spaces.

Finally we consider the case $|\xi_0| \ll |\xi_1| \sim |\xi_2|$ and $s_0 \ge \frac{n}{2}-1 $ .
If $s_0 > \frac{n}{2} $ and $s_1+s_2 \ge 0$ we obtain the claimed estimate by Sobolev
$$ \|uv\|_{H^{-s_0,0}} \lesssim \|u\|_{H^{0,\half+}} \|v\|_{H^{0,\half+}} \le  \|u\|_{H^{s_1,\half+}} \|v\|_{H^{s_2,\half+}} \, . $$
We now interpolate the special case
$$ \|uv\|_{H^{-\frac{n}{2}-,0}} \lesssim \|u\|_{H^{0,\half+}} \|v\|_{H^{0,\half+}}  $$
with the following estimate
$$ \|uv\|_{H^{1-\frac{n}{2}+,0}} \lesssim \|u\|_{H^{\frac{1}{4}+,\half+}} \|v\|_{H^{\frac{1}{4}+,\half+}} \, , $$
which follows from Prop. \ref{TheoremE} . We obtain
$$\|uv\|_{H^{-s_0-,0}} \lesssim \|u\|_{H^{k+,\half+}} \|v\|_{H^{k+,\half+}} \, , $$
where $s_0 = (1-\theta)\frac{n}{2} - \theta(1-\frac{n}{2}) = \frac{n}{2} - \theta \, \Leftrightarrow \, \theta = \frac{n}{2}-s_0$ , $0 \le \theta \le 1 $ , $k=\frac{\theta}{4} = \frac{n}{8}-\frac{s_0}{4}.$  Using our asumption  $(s_0+s_1+s_2)+s_1+s_2 > \frac{n}{2} \, \Leftrightarrow \, \frac{n}{2}-s_0 < 2(s_1+s_2),$  we obtain $0 \le k < \frac{s_1+s_2}{2}$ .  Because $|\xi_1| \sim |\xi_2|$ , we obtain
$$ \|uv\|_{H^{-s_0,0}} \lesssim \|u\|_{H^{s_1,\half+}} \|v\|_{H^{s_2,\half+}} $$
for $(s_0+s_1+s_2)+s_1+s_2 > \frac{n}{2}$ and $s_1+s_2 \ge 0$ .
\end{proof}
\begin{Cor}
\label{Cor.3.1}
Under the assumptions of Prop. \ref{Prop.3.6}
$$ \|uv\|_{H^{-s_0,0}} \lesssim \|u\|_{H^{s_1,\half-}} \|v\|_{H^{s_2,\half-}}  \, . $$
\end{Cor}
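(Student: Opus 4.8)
The plan is to reformulate the claim by duality and Plancherel as the trilinear estimate
$$ \int_* \frac{\widehat{u}_0(\xi_0,\tau_0)\,\widehat{u}_1(\xi_1,\tau_1)\,\widehat{u}_2(\xi_2,\tau_2)}{\angles{\xi_0}^{s_0}\angles{\xi_1}^{s_1}\angles{\xi_2}^{s_2}\angles{|\xi_1|-|\tau_1|}^{\half-}\angles{|\xi_2|-|\tau_2|}^{\half-}} \lesssim \|u_0\|_{L^2_{xt}}\|u_1\|_{L^2_{xt}}\|u_2\|_{L^2_{xt}} \, , $$
with $*$ the same integration over $\xi_0+\xi_1+\xi_2=0$, $\tau_0+\tau_1+\tau_2=0$ as in the proof of Prop. \ref{Prop.3.6}, and to note that this is a genuine strengthening of Prop. \ref{Prop.3.6}: since $\angles{|\xi|-|\tau|}^{\half-}\le\angles{|\xi|-|\tau|}^{\half+}$, the extra regularity in the modulation weights cannot come from a soft argument but must be extracted from the same oscillatory structure already used for Prop. \ref{Prop.3.6}. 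The decisive observation is that \emph{every} hypothesis of Prop. \ref{Prop.3.6} is a strict inequality, so there is a fixed amount of room that will absorb the loss incurred in lowering the exponent from $\half+$ to $\half-$.

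First I would treat $n=3$ (which also covers the way the corollary will be used in tandem with the $n\le3$ part of the theory): here the estimate is the special case $b_0=0$, $b_1=b_2=\half-\epsilon$ of Theorem \ref{Theorem3}. One checks that, for $\epsilon>0$ small enough, each condition in the list of Theorem \ref{Theorem3} follows from the hypotheses $s_0+s_1+s_2>\frac{n-1}{2}$ and $(s_0+s_1+s_2)+s_1+s_2>\frac{n}{2}$ of Prop. \ref{Prop.3.6} — the strictness of the first hypothesis beating the $\epsilon$-perturbed right-hand sides such as $\frac{n-1}{2}+2\epsilon$, and $n\ge3$ being used for $s_0+s_1+s_2>\frac{n+1}{4}$. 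The only conditions of Theorem \ref{Theorem3} that are strict while Prop. \ref{Prop.3.6} allows the corresponding $\ge0$ inequalities are $s_1+s_2>0$, $s_0+s_1>0$, $s_0+s_2>0$; in each of these degenerate situations (for instance $s_1+s_2=0$ forces $s_0>\frac{n-1}{2}$) the remaining spatial regularity is large enough that the estimate follows directly from Sobolev embedding together with $\|uv\|_{L^2_{xt}}\lesssim\|u\|_{L^\infty_{xt}}\|v\|_{L^2_{xt}}$ after distributing derivatives.

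For $n\ge4$ I would re-run the case analysis in the proof of Prop. \ref{Prop.3.6} verbatim. In the regions $|\xi_0|\sim|\xi_1|\gtrsim|\xi_2|$, $|\xi_0|\sim|\xi_2|\gtrsim|\xi_1|$ and their variants (cases 1--8 there) the estimate was reduced, via Proposition \ref{Str} and bilinear interpolation, to the strict inequality $s_0+s_1+s_2>\frac{n-1}{2}$; there one replaces each application of the Strichartz-type bound $\|w\|_{L^q_tL^r_x}\lesssim\|w\|_{H^{\sigma,\half+}}$ by the corresponding bilinear $L^2_{xt}$ estimate with modulation exponent $\half-\epsilon$, which is available precisely because that inequality is strict. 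In the remaining region $|\xi_0|\ll|\xi_1|\sim|\xi_2|$, $s_0>0$, one replaces the use of Proposition \ref{TheoremE} (respectively the interpolation argument for $s_0\ge\frac{n}{2}-1$) by its version with $\half-$ in place of $\half+$, again legitimate under the strict conditions $0<s_0<\frac{n}{2}-1$, $s_0+s_1+s_2>\frac{n-1}{2}$ and $(s_0+s_1+s_2)+s_1+s_2>\frac{n}{2}$, and disposes of $s_0>\frac{n}{2}$ by Sobolev as before. The main obstacle is exactly this last point: one must verify that the auxiliary estimates of Propositions \ref{Str} and \ref{TheoremE}, which are stated here only with the exponent $\half+$, do persist with exponent $\half-\epsilon$ once their underlying admissibility and regularity conditions hold strictly — this is where all the slack in the hypotheses of Prop. \ref{Prop.3.6} is ultimately spent.
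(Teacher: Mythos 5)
Your approach for $n=3$ — specializing Theorem \ref{Theorem3} to $b_0=0$, $b_1=b_2=\half-\epsilon$ and absorbing the $O(\epsilon)$ losses into the slack of the strict hypotheses — is sound. But for $n\ge4$ the plan breaks down, and it breaks down at exactly the point you flag as "the main obstacle" but then treat as a verification detail. The Strichartz-type estimates in Proposition \ref{Str} (and hence the ingredients of Proposition \ref{TheoremE}) genuinely \emph{require} modulation exponent $b>\half$ whenever $(q,r)\ne(2,2)$: already the estimate $\|u\|_{L^\infty_t L^2_x}\lesssim\|u\|_{H^{0,b}}$ fails at $b\le\half$, since passing from $X^{s,b}$ to $L^\infty_t$ uses $\int\langle|\tau|-|\xi|\rangle^{-2b}\,d\tau<\infty$, which forces $2b>1$. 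No amount of spatial slack repairs this; the obstruction lives entirely in the modulation variable. So you cannot "re-run the case analysis verbatim with $\half-\epsilon$" in the regions that rely on $L^2_t L^\infty_x$ or $L^\infty_t L^2_x$ bounds.

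The fix — and the paper's actual argument — is to leave the proof of Proposition \ref{Prop.3.6} untouched and instead perform a \emph{single bilinear interpolation at the end}: interpolate the Prop. \ref{Prop.3.6} estimate (with exponents chosen strictly inside the hypothesis region, so there is room to spare) against the soft, high-spatial-regularity estimate
$$\|uv\|_{H^{N,0}}\lesssim\|u\|_{H^{N,\frac14+}}\|v\|_{H^{N,\frac14+}}\,,\qquad N>\tfrac{n}{2}\,,$$
which is just Sobolev in $x$ combined with the elementary $L^4_t\cdot L^4_t\to L^2_t$ bound available at $b=\frac14+$. For a small interpolation weight $\theta$, the modulation exponent drops from $\half+$ to $\half-\epsilon(\theta)$, while the spatial exponents move by $O(\theta)$ toward $N$, a deformation absorbed by the open hypotheses of Prop. \ref{Prop.3.6}. (The degenerate endpoint $s_1=-s_2$, where the Sobolev estimate is not available in the stated form, is handled by interpolating with $\|uv\|_{H^{-N,0}}\lesssim\|u\|_{H^{N,\frac14+}}\|v\|_{H^{-N,\frac14+}}$ instead.) This is cleaner and, unlike your plan, does not require any of the $\half+$ inputs to hold below $\half$; it only spends the spatial slack, which is the slack that is actually there.
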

\begin{proof}
This follows by bilinear interpolation of the estimate of Prop. \ref{Prop.3.6} with the estimate
$$\|uv\|_{H^{N,0}} \lesssim \|u\|_{H^{N,\frac{1}{4}+}} \|v\|_{H^{N,\frac{1}{4}+}}  \, , $$
where, say, $N > \frac{n}{2}$ , which follows by Sobolev apart from the special case $s_1=-s_2,$  in which we interpolate with the estimate
$$\|uv\|_{H^{-N,0}} \lesssim \|u\|_{H^{N,\frac{1}{4}+}} \|v\|_{H^{-N,\frac{1}{4}+}}   $$
in order to save the condition $s_1=-s_2$ .
\end{proof}
The following multiplication law is well-known:
\begin{prop} {\bf (Sobolev multiplication law)}
\label{SML}
Let $n\ge 2$ , $s_0,s_1,s_2 \in \mathbb{R}$ . Assume
$s_0+s_1+s_2 > \frac{n}{2}$ , $s_0+s_1 \ge 0$ ,  $s_0+s_2 \ge 0$ , $s_1+s_2 \ge 0$. Then the following product estimate holds:
$$ \|uv\|_{H^{-s_0}} \lesssim \|u\|_{H^{s_1}} \|v\|_{H^{s_2}} \, .$$
\end{prop}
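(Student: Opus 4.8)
The plan is to reduce Proposition \ref{SML} to a classical trilinear convolution estimate. By duality, writing $|\widehat{u}| = \langle\cdot\rangle^{-s_1}f_1$, $|\widehat{v}| = \langle\cdot\rangle^{-s_2}f_2$ and testing $uv$ against a function $\psi$ with $|\widehat{\psi}| = \langle\cdot\rangle^{-s_0}f_0$ (so that $\|f_0\|_{L^2},\|f_1\|_{L^2},\|f_2\|_{L^2}$ equal $\|\psi\|_{H^{s_0}},\|u\|_{H^{s_1}},\|v\|_{H^{s_2}}$ respectively), it suffices, after a harmless reflection of one variable, to prove
\[
J := \int_{\xi_0+\xi_1+\xi_2=0} \frac{f_0(\xi_0)\,f_1(\xi_1)\,f_2(\xi_2)}{\langle\xi_0\rangle^{s_0}\langle\xi_1\rangle^{s_1}\langle\xi_2\rangle^{s_2}}\,d\xi_1\,d\xi_2 \;\lesssim\; \|f_0\|_{L^2}\|f_1\|_{L^2}\|f_2\|_{L^2}
\]
for arbitrary nonnegative $f_0,f_1,f_2 \in L^2(\mathbb{R}^n)$.

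Second, I would split the domain of integration according to which of $\langle\xi_0\rangle,\langle\xi_1\rangle,\langle\xi_2\rangle$ is smallest; since $\xi_0+\xi_1+\xi_2=0$ the remaining two are then comparable. Because the trilinear form $J$ is invariant under simultaneously permuting the triples $(\xi_i,s_i,f_i)$, it suffices to treat one region, say $R:\ \langle\xi_0\rangle \lesssim \langle\xi_1\rangle\sim\langle\xi_2\rangle$, keeping the weights general; the other two regions are identical after relabeling and use the pairwise conditions $s_0+s_2\ge 0$ and $s_0+s_1\ge 0$ in place of $s_1+s_2\ge 0$. On $R$ I consolidate $\langle\xi_1\rangle^{-s_1}\langle\xi_2\rangle^{-s_2}\sim\langle\xi_1\rangle^{-(s_1+s_2)}$, with $s_1+s_2\ge 0$, and then dispose of the factor $\langle\xi_0\rangle^{-s_0}$: if $s_0<0$ I bound $\langle\xi_0\rangle^{-s_0}\lesssim\langle\xi_1\rangle^{-s_0}$, leaving the single weight $\langle\xi_1\rangle^{-(s_0+s_1+s_2)}$; if $s_0\ge 0$ I transfer a fraction $0\le\beta_0\le s_1+s_2$ of the $\langle\xi_1\rangle$-weight onto $\langle\xi_0\rangle$ (legitimate since $\langle\xi_1\rangle\gtrsim\langle\xi_0\rangle$ and all exponents stay nonnegative), choosing $\beta_0$ so that $s_0+\beta_0>\tfrac n2$ — which is possible precisely because $s_0+s_1+s_2>\tfrac n2$ (taking $\beta_0=0$ when already $s_0>\tfrac n2$).

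In every case $J|_R$ has thereby been brought to the shape
\[
J|_R \;\lesssim\; \|f_k\|_{L^2}\,\bigl\|(\langle\cdot\rangle^{-a}f_i)\ast(\langle\cdot\rangle^{-b}f_j)\bigr\|_{L^2} \;\le\; \|f_k\|_{L^2}\,\|\langle\cdot\rangle^{-a}f_i\|_{L^1}\,\|f_j\|_{L^2} \;\le\; \|\langle\cdot\rangle^{-a}\|_{L^2}\,\|f_0\|_{L^2}\|f_1\|_{L^2}\|f_2\|_{L^2},
\]
by Cauchy--Schwarz in the smallest-frequency variable, then Young's inequality, then Cauchy--Schwarz again, where $a$ (equal to $s_0+s_1+s_2$ in the first case and to $s_0+\beta_0$ in the second) satisfies $2a>n$, so that $\langle\cdot\rangle^{-a}\in L^2(\mathbb{R}^n)$. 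This finishes region $R$, and the remaining regions follow verbatim after relabeling. I do not expect a genuine obstacle — the statement is classical — beyond the mild bookkeeping of redistributing the weights so that exactly one Fourier factor becomes square-integrable; the one structural point to keep in view is that it is the \emph{strict} inequality $s_0+s_1+s_2>\tfrac n2$, together with the conditions $s_i+s_j\ge 0$, that creates precisely enough room for this redistribution.
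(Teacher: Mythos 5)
The paper offers no proof of Proposition~\ref{SML}: it simply labels the estimate ``well-known'' and moves on, so there is no in-text argument to compare against. Your proof is correct and is the standard one. The duality reduction to the trilinear weighted convolution form $J$, the partition by which $\langle\xi_i\rangle$ is smallest (the remaining two being comparable because $\xi_0+\xi_1+\xi_2=0$), and the weight-transfer step that places an exponent $a$ with $2a>n$ on a single factor are all sound; the strict inequality $s_0+s_1+s_2>\tfrac n2$ is exactly what makes the admissible range of $\beta_0$ nonempty, and the pairwise conditions $s_i+s_j\ge 0$ are what keep the residual exponents nonnegative so the discarded factors are bounded by $1$. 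One small misstatement in the prose: in the subcase $s_0\ge 0$ of region $R$, the $L^2$-integrable weight $\langle\cdot\rangle^{-(s_0+\beta_0)}$ ends up on the \emph{low}-frequency variable $\xi_0$, so the initial Cauchy--Schwarz for the trilinear convolution form must be taken in one of the two high-frequency variables while Young's inequality places the $L^1$ norm on $\xi_0$ --- not ``Cauchy--Schwarz in the smallest-frequency variable'' as you write. Your displayed chain of inequalities encodes the correct structure; only that sentence misdescribes it. Finally, nothing in the argument uses $n\ge 2$, so it holds verbatim for all $n\ge 1$.
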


\section{Proof of Theorem \ref{Theorem2}}
\begin{proof} 
It is by now standard that the claimed result follows by the contraction mapping principle in connection with the linear theory, if the following estimates hold:
\begin{align}
\label{3.6}
\| \Lambda_m^{-1} \mathcal{M}(\phi_+,\phi_-,A_+,A_-)\|_{X^{s,-\frac{1}{2}++}_{\pm}} & \lesssim R^2 + R^3 
\end{align}
and
\begin{align}
\label{3.7}
\| | D^{-1} \mathcal{N}(\phi_+,\phi_-,A_+,A_-)\|_{X^{r,-\epsilon_0 +}_{\pm}} & \lesssim R^2 + R^3 \quad \mbox{for} \, n=3 \\
\label{3.8}
\| D^{-1+\epsilon} \mathcal{N}(\phi_+,\phi_-,A_+,A_-)\|_{X^{r-\epsilon,-\epsilon_0 +}_{\pm}} & \lesssim R^2 + R^3 \quad \mbox{for} \, n=2 \, ,
\end{align}
where
$$
R  = \sum_{\pm} (\|\phi_{\pm}\|_{X^{s,\frac{1}{2}+}_{\pm}} + \| D A_{\pm}\|_{X^{r-1,1-\epsilon_0}_{\pm}}) \quad \mbox{for} \, n=3 \, , $$
$$
R  = \sum_{\pm} (\|\phi_{\pm}\|_{X^{s,\frac{1}{2}+}_{\pm}} + \| D^{1-\epsilon} A_{\pm}\|_{X^{r-1+\epsilon,1-\epsilon_0}_{\pm}}) \quad \mbox{for} \, n=2 \,  $$
and similar estimates for the differences, which follow by the same arguments in view of the multilinear character of the terms.

In the sequel we estimate the various nonlinear terms, where we use repeatedly the estimate $\|u\|_{H^{l,b}} \le \|u\|_{X^{l,b}_{\pm}}$ for $ b \ge 0$ and the reverse estimate for $b \le 0$. \\
{\bf Claim 1:} For $n \ge 3$ the following estimate holds: $$\|A^{\mu} \partial_{\mu} \phi \|_{H^{s-1,-\half+2\epsilon}} \lesssim \| DA\|_{H^{r-1,1-\epsilon_0}} \|\phi\|_{H^{s,\half+\epsilon}} $$
{\bf Proof:} By Tao \cite{T1}, Cor. 8.2 we may replace $DA$ by $\Lambda A$ . We apply (\ref{Q1}) and reduce to the following 7 estimates:
\begin{align}
\label{C1.1}
\|uv\|_{H^{s-1,0}} & \lesssim \|u\|_{H^{r+\half-2\epsilon,1-\epsilon_0}} \|v\|_{H^{s-1,\half+\epsilon}} \, , \\
\label{C1.2}
\|uv\|_{H^{s-1,0}} & \lesssim \|u\|_{H^{r,1-\epsilon_0}} \|v\|_{H^{s-\half-2\epsilon,\half+\epsilon}} \, , \\
\label{C1.3}
\|uv\|_{H^{s-1,-\half+2\epsilon}} & \lesssim \|u\|_{H^{r+\half,\half-\epsilon_0}} \|v\|_{H^{s-1,\half+\epsilon}} \, , \\
\label{C1.4}
\|uv\|_{H^{s-1,-\half+2\epsilon}} & \lesssim \|u\|_{H^{r,\half-\epsilon_0}} \|v\|_{H^{s-\half,\half+\epsilon}} \, , \\
\label{C1.5}
\|uv\|_{H^{s-1,-\half+2\epsilon}} & \lesssim \|u\|_{H^{r+\half,1-\epsilon_0}} \|v\|_{H^{s-1,0}} \, , \\
\label{C1.6}
\|uv\|_{H^{s-1,-\half+2\epsilon}} & \lesssim \|u\|_{H^{r,1-\epsilon_0}} \|v\|_{H^{s-\half,0}} \, , \\
\label{C1.7}
\|uv\|_{H^{s-1,-\half+\epsilon}} & \lesssim \|u\|_{H^{r,1-\epsilon_0}} \|v\|_{H^{s,\half+\epsilon}} \, .
\end{align}
(\ref{C1.7}) follows immediately from Prop. \ref{SML}, because $r+1 > \frac{n}{2}$ and $r \ge s-1$ . The other estimates follow from Prop. \ref{Prop.3.6} and Cor. \ref{Cor.3.1} by choosing the parameters as follows: \\
(\ref{C1.1}): $s_0=1-s$ , $s_1=r+\half-2\epsilon$ , $s_2=s-1$ , so that $s_0+s_1+s_2 = r+\half-2\epsilon > \frac{n-1}{2} $ for  $r > \frac{n}{2} -1$ , and $s_1+s_2=r+s-\half-2\epsilon > n - \frac{9}{4} \ge \frac{3}{4}$ for $r > \frac{n}{2}-1$ , $s > \frac{n}{2} - \frac{3}{4}$ . \\
(\ref{C1.2}): similarly \\
(\ref{C1.3}): $s_0=1-s$ , $s_1=r+\half$ , $s_2=s-1$ , so that $s_1+s_2=r+s-\half > \frac{3}{4}$ . Here we used that we allow  $DA \in H^{r-1,1-\epsilon_0}$     instead of $DA \in H^{r-1,\half +}$ . \\
(\ref{C1.4}): $s_0 = r$ , $s_1=s-\half$ , $s_2 = 1-s$ , so that $s_1+s_2 = \half$ . \\
(\ref{C1.5}): $s_0= s-1$ , $s_1= r+\half$ , $s_2=1-s$ , so that $s_0+s_1+s_2+s_1+s_2= 2r-s+2 > \frac{n}{2}$ by our assumption $2r-s > \frac{n-3}{2}$ . \\
(\ref{C1.6}): $s_0 = s-\half$ , $s_1=r$ , $s_2=1-s$ , so that $s_0+s_1+s_2+s_1+s_2 = 2r-s+\frac{3}{2} > \frac{n}{2}$. \\[0.5em]
{\bf Claim 2:} For $n=2$ the following estimate holds:
$$ \|A^{\mu} \partial_{\mu} \phi \|_{H^{s-1,-\half+2\epsilon}} \lesssim \|D^{1-\epsilon_1}A\|_{H^{r-1+\epsilon_1,\frac{3}{4}+\epsilon}} \|\phi\|_{H^{s,\half+\epsilon}} $$
{\bf Proof:} We may replace $D^{1-\epsilon_1} A$ by $\Lambda^{1-\epsilon_1} A$ by \cite{T1}, Cor. 8.2 and argue similarly as for claim 1 using Theorem \ref{Theorem3}. By (\ref{Q1}) we reduce to the same estimates, where in (\ref{C1.1}),(\ref{C1.2})(\ref{C1.5}),(\ref{C1.6}) and (\ref{C1.7}) we replace the $H^{l,1-\epsilon_0}$-norms by $H^{l,\frac{3}{4}+\epsilon}$-norms, which makes no essential difference. According to Theorem \ref{Theorem3} we have to show $s_0+s_1+s_2 > \frac{3}{4}$ . Using that in all these cases $s_0+s_1+s_2 = r+\half$ , this is fulfilled for $r > \frac{1}{4}$ . Moreover we need $s_0+s_1+s_2+s_1+s_2 > 1$ . In (\ref{C1.1}) and (\ref{C1.2}) we have $s_1+s_2 = r+s-\half > \frac{1}{4}$ by our assumptions $r>\frac{1}{4}$ , $s>\half$ , so that this condition is satisfied, whereas in (\ref{C1.5}) we have $s_0+s_1+s_2 = 2r-s+2 > 1$ by our assumption $2r-s > - \half$ , and in (\ref{C1.6}) we obtain $s_0+s_1+s_2+s_1+s_2 = 2r-s+\frac{3}{2} > 1$ as well. In (\ref{C1.4}) we replace $H^{r,\half-\epsilon_0}$ by $H^{r,\frac{1}{4}+\epsilon}$ , which makes no essential difference, because $s_1+s_2 = \half$ . In (\ref{C1.3}) we replace $H^{r,\half-\epsilon_0}$ by $H^{r,\frac{1}{4}+\epsilon}$ and use Theorem \ref{Theorem3} with parameters $s_0=r+\half$ , $s_1=s-1$ , $s_2=1-s$ , $b_0=\frac{1}{4}+\epsilon$ , $b_1= \half + \epsilon$ , $b_2= \half -2\epsilon$ , so that $s_0+s_1+s_2 = r+\half > \frac{3}{4}$ and $s_0+s_1+s_2 + s_1 + s_2 + b_0 = r+\frac{3}{4}+\epsilon > 1$ , where it is essential to allow $D^{1-\epsilon_1} A \in H^{r-1+\epsilon_1,\frac{3}{4}+\epsilon}$ instead of $D^{1-\epsilon_1} A \in H^{r-1+\epsilon_1,\half+\epsilon}$ . Finally, (\ref{C1.7}) follows by Prop. \ref{SML}. \\[0.5em]
{\bf Claim 3:} If $ n \ge 3$ we obtain
$$\| D^{-1}(\phi \partial \phi)\|_{H^{r,0}} \lesssim \|\phi\|^2_{H^{s,\half+\epsilon}} $$
{\bf Proof:} We may replace $D^{-1}$ by $\Lambda^{-1}$ and use Prop. \ref{Prop.3.6} with $s_0=1-r$ , $s_1 = s$,  $s_2= s-1$ , so that $s_0+s_1+s_2= 2s-r >\frac{n-1}{2}$ by assumption, and also $s_1+s_2= 2s-1 > \half$ , if $s > \frac{3}{4}$ . We also need the assumption $s \ge r-1$ . \\[0.5em]
{\bf Claim 3':} For $ n =2 $ we obtain
$$ \| D^{-1+\epsilon_1}(\phi \partial \phi)\|_{H^{r-\epsilon_1,-\frac{1}{4}+2\epsilon}} \lesssim \|\phi\|^2_{H^{s,\half+\epsilon}} $$
{\bf Proof:} We may replace $D^{-1+\epsilon_1}$ by $\Lambda^{-1+\epsilon_1}$ . We use Theorem \ref{Theorem3} with $s_0,s_1,s_2$ as in claim 3 , but now $b_0 = \frac{1}{4}-2\epsilon$ , $b_1=b_2=\half + \epsilon$ , so that we need our assumption $s_0+s_1+s_2 = 2s-r > \frac{3}{4}$ and $(s_0+s_1+s_2)+(s_1+s_2+b_0) = (2s-r)+(2s-1) + \frac{1}{4}-2\epsilon > 1$ for $s \ge \half$ . \\

It remains to consider the cubic nonlinearities.\\
{\bf Claim 4:} In the case $n \ge 3$ the following estimate holds:
$$ \|A_{\mu} A_{\nu} \phi \|_{H^{s-1,-\half+2\epsilon}} \lesssim \|DA_{\mu}\|_{H^{r-1,\half+\epsilon}} \|DA_{\nu}\|_{H^{r-1,\half+\epsilon}} \|\phi\|_{H^{s,\half+\epsilon}} $$
{\bf Proof:} 1. Assume that at least one of the $A$-factors has frequencies $\ge 1$. We may replace in this case $DA_{\mu}$ by $\Lambda A_{\mu}$ and $DA_{\nu}$ by $\Lambda A_{\nu}$ by \cite{T1}, Cor. 8.2. \\
a. If $s > \frac{n}{2} - \half$ we apply Prop. \ref{Prop.3.6} twice to obtain
\begin{align*}
\|A_{\mu} A_{\nu} \phi \|_{H^{s-1,-\half+2\epsilon}} & \lesssim \|A_{\mu} A_{\nu}\|_{H^{s-1,0}} \|\phi\|_{H^{s,\half+\epsilon}} \\
& \lesssim \|A_{\mu}\|_{H^{r,\half+\epsilon}} \| A_{\nu}\|_{H^{r,\half+\epsilon}} \|\phi\|_{H^{s,\half+\epsilon}} \, ,
\end{align*}
where we choose for the first step $s_0=s-1$ , $s_1 = 1-s$ , $s_2=s$ , so that $s_0+s_1+s_2= s > \frac{n}{2}-\half$ by assumption and $s_1+s_2=1$ . For the second step the choice $s_0=1-s$ , $s_1=s_2=r$ gives $s_0+s_1+s_2 = 2r-s+1 > \frac{n-1}{2}$ by our assumption and also $s_1+s_2 = 2r >\half$ . \\
b. If $s \le \frac{n}{2}-\half$ we obtain similarly
\begin{align*}
\|A_{\mu} A_{\nu} \phi \|_{H^{s-1,-\half+2\epsilon}} & \lesssim \|A_{\mu} A_{\nu}\|_{H^{\frac{n}{2}-\frac{3}{2}+,0}} \|\phi\|_{H^{s,\half+\epsilon}} \\
& \lesssim \|A_{\mu}\|_{H^{r,\half+\epsilon}} \| A_{\nu}\|_{H^{r,\half+\epsilon}} \|\phi\|_{H^{s,\half+\epsilon}} \, .
\end{align*}
For the first step we choose $s_0=\frac{n}{2}-\frac{3}{2}+$ , $s_1= s$ , $s_2=1-s$ , so that $s_0+s_1+s_2 = \frac{n}{2}-\half + $ and $s_1+s_2=1$ , whereas for the second step $s_0=\frac{3}{2}-\frac{n}{2}-$ , $s_1=s_2=r$ , so that $s_0+s_1+s_2 = 2r + \frac{3}{2}-\frac{n}{2}- > \frac{n}{2}-\half $ for $r > \frac{n}{2}-1$ , and $s_1+s_2 = 2r > 1$ . \\
2. If both factors $A_{\mu}$ and $A_{\nu}$ have frequencies $\le 1$ the frequencies of the product and $\phi$ are equivalent, so that we may crudely estimate as follows:
\begin{align*}
&\|A_{\mu} A_{\nu} \phi \|_{H^{s-1,-\half+2\epsilon}}  \lesssim \|A_{\mu} A_{\nu} \Lambda^{s-1} \phi \|_{L^2_x L^2_t} \lesssim \|A_{\mu}\|_{L^{\infty}_t L^{\infty}_x} \|A_{\nu}\|_{L^{\infty}_t L^{\infty}_x} \|\phi\|_{L^2_t H^s_x} \\
& \lesssim \|DA_{\mu}\|_{L^{\infty}_t L^2_x} \|DA_{\nu}\|_{L^{\infty}_t L^2_x} \|\phi\|_{L^2_t H^s_x} \lesssim  \|DA_{\mu}\|_{H^{r-1,\half+\epsilon}} \|DA_{\nu}\|_{H^{r-1,\half+\epsilon}} \|\phi\|_{H^{s,\half+\epsilon}} \, .
\end{align*}
{\bf Claim 4':} In the case $n=2$ the following estimate holds:
$$ \|A_{\mu} A_{\nu} \phi \|_{H^{s-1,-\half+2\epsilon}} \lesssim \|D^{1-\epsilon_1} A_{\mu}\|_{H^{r-1+\epsilon_1,\half+\epsilon}} \|D^{1-\epsilon_1} A_{\nu}\|_{H^{r-1+\epsilon_1,\half+\epsilon}} \|\phi\|_{H^{s,\half+\epsilon}} $$
{\bf Proof:} We argue similarly as for claim 4. \\
1. If at least one $A$-factor has frequencies $\ge 1$ we use \cite{T1}, Cor.8.2 again and my replace $D^{1-\epsilon_1} A_{\mu}$ and  $D^{1-\epsilon_1} A_{\nu}$ by $\Lambda^{1-\epsilon_1} A_{\mu}$ and  $\Lambda^{1-\epsilon_1} A_{\nu}$ , respectively , and use Theorem \ref{Theorem3} twice. 
We obtain
\begin{align*}
\|A_{\mu} A_{\nu} \phi \|_{H^{s-1,-\half+2\epsilon}} & \lesssim \|A_{\mu} A_{\nu}\|_{H^{s-1+,0}} \|\phi\|_{H^{s,\half+\epsilon}} \\
& \lesssim \|A_{\mu}\|_{H^{r,\half+\epsilon}} \| A_{\nu}\|_{H^{r,\half+\epsilon}} \|\phi\|_{H^{s,\half+\epsilon}} 
\end{align*}
choosing $s_0=s-1+$ , $s_1=s$ , $s_2=1-s$ , so that $s_0+s_1+s_2=s+ > \frac{3}{4}$ and $s_1+s_2 = 1$ , for the first estimate, and choosing $s_0=1-s-$ , $s_1=s_2=r$ for the
 second estimate, so that $s_0+s_1+s_2 = 2r-s+1- > \frac{3}{4}$, where we used our assumption $2r-s > -\frac{1}{4}$ , and $s_1+s_2 = 2r > \half$ . \\
2. If both $A$-factors have frequencies $\le 1$ , we may argue as in the proof of claim 4. \\[0.5em]
{\bf Claim 5:} In the case $ n\ge 3$ we obtain
$$ \|D^{-1} (A \phi \psi)\|_{H^{r,0}} \lesssim \|DA\|_{H^{s-1,\half + \epsilon}} \|\phi\|_{H^{s,\half+\epsilon}} \|\psi\|_{H^{s,\half+\epsilon}} $$
and in the case $n=2$ the following estimate holds:
$$ \|D^{-1+\epsilon_1} (A \phi \psi)\|_{H^{r-\epsilon_1,0}} \lesssim \|D^{1-\epsilon_1}A\|_{H^{s-1+\epsilon_1,\half + \epsilon}} \|\phi\|_{H^{s,\half+\epsilon}} \|\psi\|_{H^{s,\half+\epsilon}} \, .$$
{\bf Proof:} 1. If the frequencies of the product or $A$ are  $\ge 1$ , we use \cite{T1}, Cor. 8.2 to replace everywhere $D$ by $\Lambda$ . \\
a. If $r \le \frac{n}{2}$ , we use Prop. \ref{SML} and Prop. \ref{Prop.3.6} and obtain
$$\|A \phi \psi\|_{H^{r-1,0}} \lesssim \|A\|_{H^{r,\half+\epsilon}} \|\phi \psi\|_{H^{\frac{n}{2}-1+,0}} \lesssim \|A\|_{H^{r,\half+\epsilon}} \|\phi\|_{H^{s,\half+\epsilon}} \|\psi\|_{H^{s,\half+\epsilon}} \, , $$
where for the first step $s_0=1-r$ , $s_1=r$ , $s_2= \frac{n}{2}-1+$ , so that $s_0+s_1+s_2 = \frac{n}{2}+$,  and for the second step $s_0=1-\frac{n}{2}-$ , $s_1=s_2=s$ , so that $s_0+s_1+s_2 = 1-\frac{n}{2}+2s- > \frac{n}{2}-\half$ by our assumption $s > \frac{n}{2}-\frac{3}{4}$ for $n\ge 3$ and $s_0+s_1+s_2 > 1$  by  the assumption $s > \half$ for $n=2$, and $s_1+s_2 = 2s > \half$ . \\
2. If $ r > \frac{n}{2}$ , we obtain by Prop. \ref{SML} (or Prop. \ref{Prop.3.6}) :
$$\|A \phi \psi\|_{H^{r-1,0}} \lesssim \|A\|_{H^{r,\half+\epsilon}} \|\phi \psi\|_{H^{r-1+,0}} \lesssim \|A\|_{H^{r,\half+\epsilon}} \|\phi\|_{H^{s,\half+\epsilon}} \|\psi\|_{H^{s,\half+\epsilon}} \, . $$
The first step holds, because $r > \frac{n}{2}$ , and for the second step we choose $s_0= 1-r-$,  $s_1=s_2=s$ , so that $s_0+s_1+s_2 = 2s-r+1- > \frac{n}{2}-\half$ , because by assumption $2s-r > \frac{n-1}{2}$ for $n\ge 3$ and $2s-r > \frac{3}{4}$ for $n=2$ . We also use $s \ge r-1$ . \\
2. If the frequencies of the product and $A$ are $\le 1$ , we obtain  in the case $n \ge 3$ :
\begin{align*}
\|D^{-1} (A\phi \psi)\|_{H^{r,0}} &\lesssim \|D^{-1}(A\phi \psi)\|_{H^{-2,0}} \lesssim \|A\phi\|_{H^{-2,0}} \|\psi\|_{H^{s,\half+\epsilon}} \\
& \lesssim \|DA\|_{H^{r-1,\half+\epsilon}} \|\phi\|_{H^{s,\half+\epsilon}} \|\psi\|_{H^{s,\half+\epsilon}} \, ,
\end{align*}
where we replaced $D^{-1}$ by $\Lambda^{-1}$ and $D$ by $\Lambda$ by \cite{T1}, Cor. 8.2. , and used Prop. \ref{Prop.3.6} for the second inequality  with $s_0=3$ , $s_1=-2$ , $s_2 =s$ , so that $s_0+s_1+s_2 = s+1 > \frac{n}{2}$ , and for the last inequality with $s_0=2$ , $s_1= r$ , $s_2=s$ , so that $s_0+s_1+s_2 =  2+r+s > n $ . In the case $n=2$ we may argue similarly replacing $D^{-1+\epsilon_1}$ by $\Lambda^{-1+\epsilon_1}$ and $D^{1-\epsilon_1}$ by $\Lambda^{1-\epsilon_1}$ by \cite{T1}, Cor. 8.2. 
\end{proof}

\section{Proof of Theorem \ref{Theorem1}}
We start with the solution $(\phi_{\pm},A_{\pm})$ of (\ref{3.1}),(\ref{3.2}) given by Theorem \ref{Theorem2}. Defining $\phi:=\phi_+ + \phi_-$ , $A:=A_+ + A_-$ we immediately see that $\partial_t \phi = i \Lambda_m (\phi_+ - \phi_-)$ , $\partial_t A = i D(A_+ - A_-)$, so that $\mathcal{N}$ in (\ref{3.2}) is the same as $N$ in (\ref{2.2}) and (\ref{16}). 

Moreover  by (\ref{3.2}):
\begin{align*}
\square A & = (i\partial_t - D)(i\partial_t + D)A_+ + (i\partial_t + D)(i\partial_t - D)A_- \\
& = -(i \partial_t - D)(2D)^{-1} \mathcal{N}(A,\phi) + (i\partial_t + D) (2D)^{-1} \mathcal{N}(A,\phi) = N(A,\phi) \, .
\end{align*}
Thus $A$ satisfies (\ref{16}). We also have $\phi(0) = \phi_0$ , $\partial_t \phi = \phi_1$ , $A(0) = a_0$ , $\partial_t A(0) = \dot{a}_0$.

Next we prove that the Lorenz condition $\partial^{\mu} A_{\mu} =0$ is satisfied. We define
$$
u  := \partial^{\mu} A_{\mu} = -\partial_t A_0 + \partial^j A_j \quad , \quad
u_{\pm}  := -\partial_t A_{0_{\pm}} + \partial^j A_{j_{\pm}} \, . $$
By (\ref{2.2}) we obtain
\begin{align*}
&(i\partial_t \pm D)u_{\pm} = -\partial_t(i\partial_t \pm D)A_{0_{\pm}} + \partial^j(i\partial_t \pm D)A_{j_{\pm}} \\
&= (\pm 2D)^{-1}(\partial_t(Im(-\phi \overline{i \Lambda_m(\phi_+ - \phi_-)}) -A_0|\phi|^2)-\partial^j(Im(-\phi\overline{\partial_j \phi}) - A_j |\phi|^2)) \\
&= (\pm 2 D)^{-1} (Im(\phi \Lambda_m(-\overline{i\partial_t \phi_+} + \overline{i\partial_t \phi_-})) + Im(\phi \overline{\Delta \phi}) + \partial^{\mu}(A_{\mu} |\phi|^2)) \, .
\end{align*}
Now we have
$$ Im(\phi \overline{\Delta \phi})=Im(\phi(m^2\overline{\phi} - \Lambda_m \Lambda_m \overline{\phi})) = -Im(\phi \Lambda_m(\Lambda_m \overline{\phi}_+ + \Lambda_m \overline{\phi}_-)) \, , $$
so that by (\ref{3.1}) we obtain
\begin{align*}
&(i\partial_t \pm D)u_{\pm} \\
&= (\pm 2 D)^{-1} (Im(\phi \Lambda_m(-(\overline{i\partial_t + \Lambda_m) \phi_+} + (\overline{i\partial_t - \Lambda_m)\phi_-})) +  \partial^{\mu}(A_{\mu} |\phi|^2)) \\
& = (\pm 2D)^{-1}(Im(\phi \overline{\mathcal{M}(\phi_+,\phi_-,A_+,A_-)}) + A_{\mu} \partial^{\mu}(|\phi|^2) + |\phi|^2 u) \\
& =: (\pm 2 |\nabla |)^{-1} R(A,\phi) \, .
\end{align*}
By (\ref{3.3}) and the second equation in (\ref{2.2a})
\begin{align*}
\mathcal{M}(\phi_+,\phi_-,A_+,A_-) & = 2 \sum_{\pm_1,\pm_2} \pm_2 \mathcal{A}_{(\pm_1,\pm_2)} (A_{0_{\pm_1}},\phi_{\pm_2}) + 2iP_2 + A_{\mu} A^{\mu} \phi \\
&=2((A_{0_+} + A_{0_-}) \Lambda(\phi_+ - \phi_-) \\
& \hspace{2em} + D^{-1} \nabla(A_{0_+} - A_{0_-}) \cdot \nabla (\phi_+ + \phi_-)) +2iP_2 + A_{\mu} A^{\mu} \phi \\
& = 2i(-A_0 \partial_t \phi -  (-\Delta)^{-1} \nabla \partial_t A_0 \cdot \nabla \phi) + 2iP_2 + A_{\mu}A^{\mu} \phi \, .
\end{align*}
Now by the definition of $P_2$
\begin{align*}
 P_2=R^k(R_j A_k - R_k A_j) \partial^j \phi &= (-\Delta)^{-1} \partial^k(\partial_j A_k - \partial_k A_j) \partial^j \phi \\
 &= (-\Delta)^{-1} \nabla(\partial^k A_k)\cdot \nabla \phi + A_j \partial^j \phi 
 \end{align*}
 and by the definition of $u$
$$ (-\Delta)^{-1} \nabla \partial_t A_0 \cdot \nabla \phi = (-\Delta)^{-1} \nabla(\partial^j A_j -u)\cdot \nabla \phi \, , $$
so that we obtain
\begin{align*}
\mathcal{M}(\phi_+,\phi_-,A_+,A_-) & = 2i(-A_0 \partial_t \phi + (-\Delta)^{-1} \nabla u \cdot \nabla \phi + A_j \partial^j \phi) +  A_{\mu} A^{\mu} \phi\\
& = 2i(A_{\mu} \partial^{\mu} \phi + (-\Delta)^{-1} \nabla u \cdot \nabla \phi) + A_{\mu} A^{\mu} \phi \, ,
\end{align*}
which implies
\begin{align*}
&R(A,\phi) \\
& = Im(-\phi 2i(A_{\mu} \partial^{\mu} \overline{\phi} + (-\Delta)^{-1} \nabla u \cdot \nabla \overline{\phi}) + A_{\mu} \partial^{\mu}(|\phi|^2) +|\phi|^2 u + Im(\phi A_{\mu} A^{\mu} \overline{\phi}) \\
& = 2 Re(-\phi A_{\mu} \partial^{\mu} \overline{\phi}) -2 Re(\phi (-\Delta)^{-1} \nabla u \cdot \nabla \overline{\phi}) + 2 Re (A_{\mu} \phi \partial^{\mu} \overline{\phi}) + |\phi|^2 u \\
& = -2 Re(\phi \nabla \overline{\phi}) \cdot (-\Delta)^{-1} \nabla u + |\phi|^2 u \, ,
\end{align*}
so that
$$ (i\partial_t \pm D)u_{\pm} = (\pm 2D)^{-1}(-2 Re(\phi \nabla \overline{\phi}) \cdot (-\Delta)^{-1} \nabla u + |\phi|^2 u) \, , $$
and thus $u$ fulfills the linear equation
$$ \square u = -2 Re(\phi \nabla \overline{\phi}) \cdot (-\Delta)^{-1} \nabla u + |\phi|^2 u \, . $$
The data of $u$ fulfill by (\ref{10}) and (\ref{12}):
$$ u(0) = -\partial_t A_0(0) + \partial^j A_j(0) = - \dot{a}_{00} + \partial^j a_{0j} = 0 $$
and, using that $A$ is a solution of (\ref{16}) and also (\ref{15}):
\begin{align*}
\partial_t u(0) & = -\partial_t^2 A_0(0) + \partial_t \partial^j A_j(0) = -\partial^j \partial_j A_0(0) - j_{0_{|t=0}} + \partial_t \partial^j A_j(0) \\
& = -\partial^j (\partial_j A_0(0) - \partial_t A_j(0)) - j_{0_{|t=0}} = \partial^j F_{0j}(0) -  j_{0_{|t=0}} \\
& = Im(\phi_0 \overline{\phi}_1) - j_{0_{|t=0}} =  j_{0_{|t=0}}- j_{0_{|t=0}} = 0 \, .
\end{align*}
By uniqueness this implies $u = 0$. Thus the Lorenz condition $\partial^{\mu} A_{\mu} =0$ is satisfied. Under the Lorenz condition however we know that $\mathcal{M}$ in (\ref{3.1}) is the same as $M$ in (\ref{2.1}) and (\ref{17}).  Moreover by (\ref{3.1}) we obtain
\begin{align*}
(\square - m^2) \phi & = (i\partial_t - \Lambda_m)(i \partial_t + \Lambda_m) \phi_+ + (i\partial_t + \Lambda_m)(i \partial_t - \Lambda_m) \phi_- \\
& = (-(i \partial_t - \Lambda_m)  + (i \partial_t + \Lambda_m))(2 \Lambda_m)^{-1} \mathcal{M}(\phi_+,\phi_-,A_+,A_-) \\
& = \mathcal{M}(\phi_+,\phi_-,A_+,A_-) = M(\phi,A) \, .
\end{align*}
Thus $\phi$ satisfies (\ref{17}). Because (\ref{16}),(\ref{17}) is equivalent to (\ref{1}),(\ref{2}), where $F_{\mu \nu} := \partial_{\mu} A_{\nu} - \partial_{\nu} A_{\mu}$, we also have that $F_{k0}$ satisfies (\ref{2.10}) and $F_{kl}$ satisfies (\ref{2.11}).\\[0.5em]

What remains to be shown are the following properties of the electromagnetic field $F_{\mu \nu}$:
\begin{align*}
&F_{\mu \nu} \in X_+^{s-1,\frac{1}{2}+}[0,T] + X_-^{s-1,\frac{1}{2}+}[0,T] \\
& \Longrightarrow \, F_{\mu \nu} \in H^{s-1,\half+}[0,T] \, , \,  \partial_t F_{\mu \nu} \in H^{s-2,\half+}[0,T]
& \mbox{in the case} \, n=3\, , \\
&D^{-\epsilon} F_{\mu \nu} \in H^{s-1+\epsilon,\frac{1}{2}+}[0,T] \, , \, \partial_t D^{-\epsilon} F_{\mu \nu} \in H^{s-2+\epsilon,\frac{1}{2}+}[0,T]
&\mbox{in the case} \, n=2 \, .
\end{align*}
By transformation of (\ref{2.10}) and (\ref{2.11}) into a first order system as before and
using well-known results for Bourgain type spaces these properties  in the case $n=3$ are reduced to:
\begin{align}
\label{4.10}
&F_{\mu \nu} (0) \in H^{s-1} \\
\label{4.11}
&\partial_t  F_{\mu \nu}(0) \in H^{s-2} \quad \mbox{and} \\
\label{4.12}
&D^{-1}\square F_{\mu \nu} \in X_+^{s-1,-\frac{1}{2}+}[0,T] + X_-^{s-1,-\frac{1}{2}+}[0,T] \, .
\end{align}
In the case $n=2$ we refer to \cite{KS} and have to show
\begin{align}
\label{4.13}
&D^{-\epsilon} F_{\mu \nu} (0) \in H^{s-1+\epsilon} \, ,
\\
\label{4.14}
&D^{-\epsilon} \partial_t  F_{\mu \nu}(0) \in H^{s-2+\epsilon} \quad \mbox{and} \\
\label{4.15}
&\Lambda_+^{-1} D^{-\epsilon} \square F_{\mu \nu} \in H^{s-1+\epsilon,-\frac{1}{2}+}[0,T]  \, ,
\end{align}
where in all cases $\square F_{\mu \nu} $ is given by (\ref{2.10}) and (\ref{2.11}). These properties for $n=2$ in fact imply $D^{-\epsilon} F_{\mu \nu} \in H^{s-1+\epsilon,\half+}[0,T]$ and $ \partial_t D^{-\epsilon} F_{\mu \nu} \in H^{s-2+\epsilon,\half+}[0,T]$
by \cite{KS}, Thm. 5.5, Prop. 5.5 and Prop. 5.6 or \cite{Se}, Thm. 1. \\
We use this approach in the two-dimensional case in order to avoid the unpleasant singularity of $D^{-1}$ . \\[0.5em]
We start to prove (\ref{4.12}) and (\ref{4.15}):
We first estimate the quadratic terms by (\ref{Q2}) and (\ref{Q3}). \\
{\bf Claim 1:} In the case $n \ge 3$ the following estimates hold:
\begin{align}
\label{Cl1.1a}
\|D^{-1} \bar{D}_{\pm}^{\half-2\epsilon} (D^{\half+2\epsilon} \phi_{\pm_1} D \phi_{\pm_2}) \|_{X^{s-1,-\half+2\epsilon}_{\pm}} \lesssim \|\phi_{\pm_1} \|_{X^{s,\half+\epsilon}_{\pm_1}} \|\phi_{\pm_2} \|_{X^{s,\half+\epsilon}_{\pm_2}} \\
\nonumber
\mbox{if} \, \pm_1 = \pm_2 \, , \\
\label{Cl1.1b}
\|D^{-1} D^{\half} \bar{D}_{\pm}^{\half-2\epsilon} (D^{\half+\epsilon} \phi_{\pm_1} D^{\half+\epsilon} \phi_{\pm_2}) \|_{X^{s-1,-\half+2\epsilon}_{\pm}} \lesssim \|\phi_{\pm_1} \|_{X^{s,\half+\epsilon}_{\pm_1}} \|\phi_{\pm_2} \|_{X^{s,\half+\epsilon}_{\pm_2}} \\
\nonumber
\mbox{if} \, \pm_1 \neq \pm_2 \, , \\
\label{Cl1.2}
\|D^{-1}( D^{\half}\bar{D}_{\pm_1}^{\half}  \phi_{\pm_1} D \phi_{\pm_2}) \|_{X^{s-1,-\half+2\epsilon}_{\pm}} \lesssim \|\phi_{\pm_1} \|_{X^{s,\half+\epsilon}_{\pm_1}} \|\phi_{\pm_2} \|_{X^{s,\half+\epsilon}_{\pm_2}} \\
\label{Cl1.3}
\|D^{-1} ( D^{\half}\phi_{\pm_1} D \bar{D}^{\half}_{\pm_2} \phi_{\pm_2}) \|_{X^{s-1,-\half+2\epsilon}_{\pm}} \lesssim \|\phi_{\pm_1} \|_{X^{s,\half+\epsilon}_{\pm_1}} \|\phi_{\pm_2} \|_{X^{s,\half+\epsilon}_{\pm_2}} \\
\label{Cl1.4}
\|D^{-1} ( \phi_{\pm_1} D \phi_{\pm_2}) \|_{X^{s-1,-\half+2\epsilon}_{\pm}} \lesssim \|\phi_{\pm_1} \|_{X^{s,\half+\epsilon}_{\pm_1}} \|\phi_{\pm_2} \|_{X^{s,\half+\epsilon}_{\pm_2}} \, .
\end{align}
{\bf Proof:} By \cite{T1}, Cor. 8.2 we may replace  $D^{-1}$ by $\Lambda^{-1}$ . \\
(\ref{Cl1.1a}): This reduces to
$$ \|uv\|_{X^{s-2,0}_{\pm}} \lesssim \|u\|_{X^{s-\half-2\epsilon,\half+\epsilon}_{\pm_1}} \|v\|_{X^{s-1,\half+\epsilon}_{\pm_2}} \, . $$
We use Prop. \ref{Prop.3.6} with $s_0=2-s$ , $s_1 = s-\half-2\epsilon$ , $s_2=s-1$ , so that $s_0+s_1+s_2 =s+\half-2\epsilon > \frac{n}{2}-\frac{1}{4}$ on our assumption $s > \frac{n}{2}-\frac{3}{4}$ . The condition $s_0+s_1+s_2 + s_1+s_2 > \frac{n}{2}$ is not needed  (and violated for $s$ close to $\frac{3}{4}$) in the case $\pm_1 = \pm_2$ for $n=3$ according to Thm. \ref{Theorem3}. If $n \ge 4$ this condition is in fact fulfilled, because $s_1+s_2=2s-\frac{3}{2}-2\epsilon > n-3 \ge 1$ assuming $s > \frac{n}{2}-\frac{3}{4}$ . \\
(\ref{Cl1.1b}): This reduces to
$$ \|uv\|_{H^{s-\frac{3}{2},0}} \lesssim \|u\|_{H^{s-\half-\epsilon,\half+\epsilon}} \|v\|_{H^{s-\half-\epsilon,\half+\epsilon}} \, ,$$
which holds by Prop. \ref{Prop.3.6} with $s_0=\frac{3}{2}-s$ , $s_1=s_2= s-\half-\epsilon$ , so that $s_0+s_1+s_2 = s +\half-2\epsilon > \frac{n}{2}-\frac{1}{4}$ and $s_1+s_2 = 2s-1-2\epsilon > n - \frac{5}{2} \ge \half $ for $n \ge 3$ and $s > \frac{n}{2}-\frac{3}{4}$ . \\
(\ref{Cl1.2}): We have to show
$$\|uv\|_{H^{s-2,-\half+2\epsilon}} \lesssim \|u\|_{H^{s-\half,0}} \|v\|_{H^{s-1,\half + \epsilon}} \, . $$
We use Prop. \ref{Prop.3.6} with $s_0=s-\half$ , $s_1=s-1$ , $s_2=2-s$ , so that $s_0+s_1+s_2=s+\half > \frac{n}{2}-\frac{1}{4}$ and $s_1+s_2=1$ . \\
(\ref{Cl1.3}): It suffices to prove
$$ \|uv\|_{H^{s-1,-\half + 2\epsilon}} \lesssim \|u\|_{H^{s-\half,\half + \epsilon}} \|v\|_{H^{s-1,0}} \, , $$
which is handled similarly. \\
(\ref{Cl1.4}): This follows easily by Prop. \ref{SML} for $s > \frac{n}{2}-1$ . \\[0.5em]

In the case $n=2$ we use the original system (\ref{2.10}),(\ref{2.11}) and replace the estimates (\ref{Q2}) and (\ref{Q3}) using the following result for the null forms $Q_{0i}(u,v)$ and $Q_{ij}(u,v)$ , which was given in \cite{KMBT}, Prop. 1.
\begin{align*}
 Q_{0i}(u,v) + Q_{ij}(u,v) & \lesssim D_+^{\half} D_-^{\half} (D_+^{\half} u D_+^{\half} v) + D_+^{\half}(D_+^{\half} D_-^{\half} u D_+^{\half} v) + D_+^{\half}(D_+^{\half} u D_+^{\half} D_-^{\half} v) 
\end{align*}
Interpolation with the trivial bound $D_+ u D_+ v$ gives (for $0 \le \epsilon \le \frac{1}{4}$) :
\begin{align*}
Q_{0i}(u,v) &+ Q_{ij}(u,v)  \lesssim D_+^{\half-2\epsilon} D_-^{\half-2\epsilon} (D_+^{\half+2\epsilon} u D_+^{\half+2\epsilon} v) \\
&+ D_+^{\half-2\epsilon}(D_+^{\half+2\epsilon} D_-^{\half-2\epsilon} u D_+^{\half+2\epsilon} v) + D_+^{\half-2\epsilon}(D_+^{\half+2\epsilon} u D_+^{\half+2\epsilon} D_-^{\half-2\epsilon} v) \, ,
\end{align*}
and by the fractional Leibniz rule for $D_+$  we obtain
\begin{align}
\label{Q4}
Q_{0i}(u,v) + Q_{ij}(u,v) &  \lesssim D_-^{\half-2\epsilon} (D_+^{\half+2\epsilon} u D_+ v)
 +D_+^{\half+2\epsilon} D_-^{\half-2\epsilon} u D_+ v                          \\ \nonumber
&+ D_+^{\half+2\epsilon} u D_-^{\half-2\epsilon}  D_+ v
+ D_-^{\half-2\epsilon}(D_+ u D_+^{\half+2\epsilon} v) \\ \nonumber
 &+ D_+D_-^{\half-2\epsilon} u D_+^{\half+2\epsilon}v
+ D_+ u D_+^{\half+2\epsilon} D_-^{\half-2\epsilon}v
 \, .
\end{align}
One easily checks that $\phi_{\pm} \in X^{s,\half+}_{\pm} $ implies $\Lambda_+ \phi \in H^{s-1,\half+}$ , and $D^{1-\epsilon_1} A_{\pm} \in X^{r-1+\epsilon_1,\half+}_{\pm}$ implies $ \Lambda_+ D^{1-\epsilon_1} A \in H^{r-2+\epsilon_1,\half+}$ . In the case $n=2$ we now use (\ref{Q4}) and may reduce to  the estimates for the nonlinearities in claims 2,4 and 5 below. \\[0.5em]
{\bf Claim 2:} For $n=2$ the following estimates hold:
\begin{align}
\label{Cl2.1}
\| \Lambda_+^{-1} D^{-\epsilon_1}  D_-^{\half-2\epsilon} (D_+^{\half+2\epsilon} \phi D_+ \psi) \|_{H^{s-1+\epsilon_1,-\half + 2\epsilon}} &\lesssim \| \Lambda_+ \phi\|_{H^{s-1,\half+\epsilon}} \| \Lambda_+ \psi\|_{H^{s-1,\half+\epsilon}} \, , \\
\label{Cl2.2}
\| \Lambda_+^{-1} D^{-\epsilon_1} ( D_+^{\half+2\epsilon} D_-^{\half-2\epsilon} \phi D_+ \psi) \|_{H^{s-1+\epsilon_1,-\half + 2\epsilon}} &\lesssim \| \Lambda_+ \phi\|_{H^{s-1,\half+\epsilon}} \| \Lambda_+ \psi\|_{H^{s-1,\half+\epsilon}} \, , \\
\label{Cl2.3}
\| \Lambda_+^{-1} D^{-\epsilon_1}  (D_+^{\half+2\epsilon} \phi D_-^{\half-2\epsilon} D_+ \psi) \|_{H^{s-1+\epsilon_1,-\half + 2\epsilon}} &\lesssim \| \Lambda_+ \phi\|_{H^{s-1,\half+\epsilon}} \| \Lambda_+ \psi\|_{H^{s-1,\half+\epsilon}} \, . 
\end{align}
{\bf Proof:} The singularity of $D^{-\epsilon_1}$ is harmless (\cite{T1}, Cor. 8.2) and may be replaced by $\Lambda^{-\epsilon_1}$ . \\
(\ref{Cl2.1}) reduces to
$$ \|\phi \psi \|_{H^{s-2,0}} \lesssim \|\phi\|_{H^{s-\half-2\epsilon,\half+\epsilon}} \|\psi\|_{H^{s-1,\half+\epsilon}} . $$
Use Prop. \ref{SML} and our assumption $s \ge \frac{3}{4} $ so that $(s-\half)+(s-1) \ge 0$ .\\ (\ref{Cl2.2}) and (\ref{Cl2.3}) can be handled similarly. \\[0.5em]
{\bf Claim 3:} In the case $n \ge 3$ the following estimate holds:
$$ \|D^{-1} \partial_l (A \phi \psi)\|_{H^{s-1,-\half+2\epsilon}} \lesssim \|DA\|_{H^{r-1,\half + \epsilon}} \|\phi\|_{H^{s,\half+\epsilon}} \|\psi\|_{H^{s,\half+\epsilon}} \, . $$
{\bf Proof:} We may replace $D$ by $\Lambda$ on the right hand side and use Prop. \ref{Prop.3.6} , which gives
\begin{align*}
\| A \phi \psi \|_{H^{s-1,-\half+2\epsilon}} & \lesssim \|A\|_{H^{r,\half+\epsilon}} \|\phi \psi\|_{H^{s-\half,0}} \\
& \lesssim \|A\|_{H^{r,\half+\epsilon}} \|\phi\|_{H^{s,\half+\epsilon}}  \psi\|_{H^{s,\half+\epsilon}} 
\end{align*}
with parameters $s_0=s-\half$ , $s_1=r$ , $s_2=1-s$ for the first estimate, so that $s_0+s_1+s_2 = r+\half > \frac{n-1}{2}$ and $s_0+s_1+s_2+s_1+s_2= 2r-s+\frac{3}{2} >\frac{n}{2}$ , because we assume $r > \frac{n}{2}-1$ and $2r-s > \frac{n-3}{2}$ . For the second estimate choose $s_0=\half-s$ , $s_1=s_2=s$ , so that $s_0+s_1+s_2 = s+\half > \frac{n-1}{2}$ and $s_1+s_2 =2s >\half$ . \\[0.5em]
{\bf Claim 4:} In the case $n=2$ the following estimate holds:
\begin{align*}
 &\|\Lambda_+^{-1} D^{1-\epsilon_1} (A \phi \psi)\|_{H^{s-1+\epsilon_1,-\half+2\epsilon}} \\
&\lesssim \|\Lambda_+ D^{1-\epsilon_1} A\|_{H^{r-2+\epsilon_1,\half + \epsilon}} \|\Lambda_+ \phi\|_{H^{s-1,\half+\epsilon}} \|\Lambda_+ \psi\|_{H^{s-1,\half+\epsilon}} \, . 
\end{align*} 
{\bf Proof:} Arguing as for claim 3 we may replace $D^{1-\epsilon_1}$ by $\Lambda^{1-\epsilon_1}$. We choose the same parameters and use $r+\half > \frac{3}{4}$ and $2r-s+\frac{3}{2} > 1$ by assumption for the first estimate and $s > \half$ for the second estimate.\\[0.5em]

Finally we have to consider the term $\partial_l (A_k |\phi|^2)$ . Using $\partial_t \phi = i \Lambda_m(\phi_+ - \phi_-)$ and $\partial_t A_k = iD(A_{k+} - A_{k-})$ we obtain
$$ \partial_t(A_k |\phi|^2) = i |\phi|^2 D(A_{k+}-A_{k-}) + i A_k \Lambda_m(\phi_+ - \phi_-) \overline{\phi} + A_k \phi \overline{i \Lambda_m(\phi_+ - \phi_-)} \, . $$
Now
$$ |\phi|^2 DA \precsim D(|\phi|^2 A) + A \phi D \overline{\phi} + A \overline{\phi} D \phi \, , $$
so that we only have to consider terms of the type $D(A_k |\phi|^2)$ and $ A_k \phi \Lambda \phi$ . The first term was considered in claim 3 and claim 4. Thus it remains to prove the following claim. \\
{\bf Claim 5:} In the case $n \ge 3$ the following estimate holds:
$$ \|D^{-1}(A\phi \Lambda \psi)\|_{H^{s-1,-\half+2\epsilon}} \lesssim \|DA\|_{H^{r-1,\half+\epsilon}} \|\phi\|_{H^{s,\half+\epsilon}}  \|\psi\|_{H^{s,\half+\epsilon}}$$
and in the case $n=2$ :
\begin{align*}
 &\|\Lambda_+^{-1} D^{-\epsilon_1}(A\phi \Lambda \psi)\|_{H^{s-1+\epsilon_1,-\half+2\epsilon}} \\
& \lesssim  \|\Lambda_+ D^{1-\epsilon_1} A\|_{H^{r-2+\epsilon_1,\half+\epsilon}} \|\Lambda_+ \phi\|_{H^{s-1,\half+\epsilon}}  \|\Lambda_+ \psi\|_{H^{s-1,\half+\epsilon}} \, .
\end{align*}
{\bf Proof:} By \cite{T1}, Cor. 8.2 we may replace powers of $D$ by powers of $\Lambda$ everywhere in the following argument, provided the frequencies of the product or $A$ are $\ge 1$ . Let us first consider the case $n \ge 3$ . By Prop. \ref{Prop.3.6} we obtain
\begin{align*}
\| A \phi \Lambda \psi\|_{H^{s-2,-\half + 2\epsilon}} & \lesssim \|A\|_{H^{r,\half+\epsilon}} \| \phi \Lambda \psi\|_{H^{s-\frac{5}{4},0}} \\
& \lesssim \|A\|_{H^{r,\half+\epsilon}} \| \phi \|_{H^{s,\half+\epsilon}} \| \psi \|_{H^{s,\half+\epsilon}}\, ,
\end{align*}
where we choose for the first estimate $s_0=s-\frac{5}{4}$ , $s_1=2-s$ , $s_2=r$ , so that $s_0+s_1+s_2 = r + \frac{3}{4} > \frac{n}{2} - \half$  using $r > \frac{n}{2}-\frac{5}{4}$ and $s_1+s_2 = r-s+2 \ge 1 $ using  $r \ge s-1 $.  For the second estimate we choose $s_0 = \frac{5}{4}-s$ , $s_1=s $ , $s_2=s-1$, thus we have $s_0+s_1+s_2 = s + \frac{1}{4} > \frac{n}{2}-\half$ and $s_1+s_2 = 2s-1 > \half$ using  $s > \frac{3}{4}$ . \\
Next consider the case $n=2$ .  By Theorem \ref{Theorem3} we obtain
\begin{align*}
\| A \phi \Lambda \psi\|_{H^{s-2,-\half + 2\epsilon}} & \lesssim \|A\|_{H^{r,\half+\epsilon}} \| \phi \Lambda \psi\|_{H^{s-1-,0}} \\
& \lesssim \|A\|_{H^{r,\half+\epsilon}} \| \phi \|_{H^{s,\half+\epsilon}} \| \psi \|_{H^{s,\half+\epsilon}}\, ,
\end{align*}
where we choose for the first estimate $s_0=s-1-$ , $s_1=2-s$ , $s_2=r$ , so that $s_0+s_1+s_2 = r + 1- > 1$ and $r+s-1 \ge 0$ for $s\ge \frac{3}{4}$ and $r > \frac{1}{4}$ .  For the second estimate we choose $s_0 = 1-s+$ , $s_1=s $ , $s_2=s-1$, thus we have $s_0+s_1+s_2 = s+  > \frac{3}{4}$ and $s_0 + 2(s_1+s_2) = 3s-1- > 1$ for  $s > \frac{2}{3}$ . \\
If the frequencies of the product as well as $A$ are $\le 1$ we estimate for $n \ge 3$ (and similarly for $n=2$)
\begin{align*}
\| D^{-1}(A\phi \Lambda \psi)\|_{H^{s-1,-\half + \epsilon}} & \lesssim \| D^{-1}(A\phi \Lambda \psi)\|_{H^{-3,0}} \lesssim \|A\phi\|_{H^{-2,0}} \|\Lambda \psi\|_{H^{s-1,\half+\epsilon}} \\
& \lesssim \|DA\|_{H^{r-1,\half+\epsilon}} \|\phi\|_{H^{s,\half+\epsilon}} \|\psi\|_{H^{s,\half+\epsilon}} \, , 
\end{align*}
where we used \cite{T1}, Cor 8.2 , again to replace $D^{-1}$ by $\Lambda^{-1}$ and $D$ by $ \Lambda$ , and Prop. \ref{SML} (or Prop. \ref{Prop.3.6}) with $s_0=4$ , $s_1=-2$ , $s_2=s-1$ for the second inequality , so that $s_0+s_1+s_2 >\frac{n}{2}$ , and $s_0=2$ , $s_1= r$ , $s_2=s$ , so that $s_0+s_1+s_2 = r+s+2 > n$ for the last inequality.

The proof of (\ref{4.12}) and (\ref{4.15}) is now complete. \\[0.5em]
It remains to prove (\ref{4.10}),(\ref{4.11}), (\ref{4.13}) and (\ref{4.14}). The properties (\ref{4.10}) and (\ref{4.13}) are given by (\ref{8}). 

Next we consider the case $n \ge 3$ and prove (\ref{4.11}). By (\ref{1}) we have
$$\partial_t F_{{0k}_{| t=0}} = - \partial_t F_{{k0}_{|t=0}} = - \partial^l F_{{kl}_{| t=0}} + j_{k_{| t=0}} \, . $$ 
By (\ref{8}) we have $\partial^l F_{{kl}_{| t=0}} \in H^{s-2}$. It remains to prove 
$$ j_{k_{| t=0}} = Im(\phi_0 \overline{\partial_k \phi_0}) + |\phi_0|^2 a_{0k} \in H^{s-2} \, . $$
First we obtain
$$ \|\phi_0 \overline{\partial_k \phi_0} \|_{H^{s-2}} \lesssim \|\phi_0\|_{H^s} \| \partial_k \phi_0\|_{H^{s-1}} < \infty $$
by Prop. \ref{SML}, because $s > \frac{n}{2} -1$ .

Concerning the term $|\phi_0|^2 a_{0k}$ we prove \\
{\bf Claim:}
 \begin{align*} \| |\phi_0|^2 a_{0k}\|_{H^{s-2}} & \lesssim \|\phi_0\|_{H^s}^2 \|Da_{0k}\|_{H^{r-1}} \quad \quad \quad \quad {\mbox for } \, n \ge 3 \\
 \| |\phi_0|^2 a_{0k}\|_{H^{s-2}} & \lesssim \|\phi_0\|_{H^s}^2 \|D^{1-\epsilon_1} a_{0k}\|_{H^{r-1+\epsilon_1}} \quad {\mbox for } \, n = 2 \, .
\end{align*}
{\bf Proof:} If $r \le \frac{n}{2}$ we obtain by Prop. \ref{SML} :
$$ \| |\phi_0|^2 a_{0k}\|_{H^{s-2}} \lesssim \| |\phi_0|^2\|_{H^{\frac{n}{2}-2+s-r+}} \|a_{0k}\|_{H^r} \lesssim \| \phi_0\|_{H^{s}}^2 \|a_{0k}\|_{H^r} \, , $$
where the first estimate directly follows from Prop. \ref{SML}, and the last estimate requires
$ s \ge \frac{n}{2}-2+s-r+ $ and $-\frac{n}{2}+2-s+r+2s > \frac{n}{2}$ , which hold by the assumptions $s,r >\frac{n}{2}-1$ . In the case $r > \frac{n}{2}$ we obtain by Prop. \ref{SML}
$$ \| |\phi_0|^2 a_{0k}\|_{H^{s-2}} \lesssim \| |\phi_0|^2\|_{H^{s-2}} \|a_{0k}\|_{H^r} \lesssim \| \phi_0\|_{H^{s}}^2 \|a_{0k}\|_{H^r} \, , $$
assuming $ s > \frac{n}{2}-1$ .

In the case of high frequencies $\ge 1$ of $a_{0k}$ this is enough for our claim to hold. Otherwise we have equivalent frequencies of $|\phi_0|^2 a_{0k}$ and $|\phi_0|^2$ , so that we obtain in the case $n \ge 3$ :
\begin{align*}
\| |\phi_0|^2 a_{0k}\|_{H^{s-2}} & \lesssim  \|\Lambda^{s-2}(|\phi_0|^2) a_{0k}\|_{L^2} \lesssim \|\Lambda^{s-2}(|\phi_0|^2)\|_{L^2} \|a_{0k}\|_{L^{\infty}} \\
& \lesssim \|\phi_0|^2\|_{H^{s-2}} \|D a_{0k}\|_{H^{\frac{n}{2}-1+}} \lesssim \|\phi_0\|_{H^s}^2 \|Da_{0k}\|_{H^{r-1}}
\end{align*}
and similarly for $n=2$ with $\|Da_{0k}\|_{H^{s-1}}$ replaced by $\|D^{1-\epsilon_1} a_{0k} \|_{H^{r-1+\epsilon_1}}$ .  \\
Moreover
$$\partial_t F_{jk} = \partial_0(\partial_j A_k - \partial_k A_j) = \partial_j(\partial_k A_0 + F_{0k}) - \partial_k(\partial_j A_0 + F_{0j}) = \partial_j F_{0k} - \partial_k F_{0j} \, ,$$
so that by (\ref{8}) we obtain
$$ \partial_t F_{{jk}_{| t=0}} = \partial_j F_{{0k}_{| t=0}} - \partial_k F_{{0j}_{| t=0}} \in H^{s-2} \, . $$
We are done in the case $n \ge 3$ .

Next we consider the case $n=2$ and prove (\ref{4.14}). By (\ref{1}) we have
$$D^{-\epsilon} \partial_t F_{{0k}_{| t=0}} = -D^{-\epsilon} \partial_t F_{{k0}_{| t=0}} = -D^{-\epsilon} \partial^l F_{{kl}_{| t=0}} +
D^{-\epsilon} j_{{k}_{| t=0}} \, . $$
By (\ref{8}) we have $D^{-\epsilon} \partial^l F_{{kl}_{| t=0}} \in H^{s-2+\epsilon}$, so that it remains to prove
$$D^{-\epsilon} j_{{k}_{| t=0}} = D^{-\epsilon} Im(\phi_0 \overline{\partial_k \phi_0}) + D^{-\epsilon} (|\phi_0|^2 a_{0k}) \in H^{s-2+\epsilon} \, . $$
First we show the estimate \\
{\bf Claim:} $$ \|D^{-\epsilon}(|\phi_0|^2 a_{0k})\|_{H^{s-2+\epsilon}} \lesssim \|\phi_0\|_{H^s}^2 \|D^{1-\epsilon} a_{0k}\|_{H^{r-1+\epsilon}} \, . $$
{\bf Proof:} For frequencies of the product $\ge 1$ this immediately follows from the proof of the previous claim. If the frequencies of the product is $\le 1$ and the frequencies of $a_{0k}$ are $\ge 1$ , we estimate assuming $ s > \half$ :
\begin{align*}
\| D^{-\epsilon}(|\phi_0|^2) a_{0k})\|_{H^{s-2+\epsilon}} & \lesssim \| |\phi_0|^2 a_{0k}\|_{L^1} \lesssim \| |\phi_0|^2\|_{L^2} \|a_{0k}\|_{L^2} \\
& \lesssim \| \phi_0\|_{H^s}^2 \|a_{0k}\|_{H^r} \lesssim \|\phi_0\|_{H^s}^2 \|D^{1-\epsilon} a_{0k}\|_{H^{r-1+\epsilon}} \, . 
\end{align*}
Finally, if both frequencies are $\le 1$ we obtain
\begin{align*}
\| D^{-\epsilon}(|\phi_0|^2) a_{0k})\|_{H^{s-2+\epsilon}} & \lesssim \| |\phi_0|^2 a_{0k}\|_{L^1} \lesssim \| |\phi_0|^2\|_{L^1} \|a_{0k}\|_{L^{\infty}} \\
& \lesssim \| \phi_0\|_{L^2}^2 \|D^{1-\epsilon}a_{0k}\|_{L^2} \lesssim \|\phi_0\|_{H^s}^2 \|D^{1-\epsilon} a_{0k}\|_{H^{r-1+\epsilon}} \, . 
\end{align*}

It remains to prove for $n=2$ and $s > \half$ :
$$ \|D^{-\epsilon}(\phi_0 \overline{\partial_k \phi_0})\|_{H^{s-2+\epsilon}} \lesssim \|\phi_0\|_{H^s} \|\partial_k \phi_0\|_{H^{s-1}} \, . $$
For large frequencies of the product this follows from Prop. \ref{SML}. If the product has frequencies $\le 1$ the frequencies of $\phi_0$ and $\overline{\partial_k \phi_0}$ are equivalent, so that
\begin{align*}
\|D^{-\epsilon}(\phi_0 \overline{\partial_k \phi_0})\|_{H^{s-2+\epsilon}} \lesssim \| \phi_0 \overline{\partial_k \phi_0}\|_{H^{-1,\frac{2}{1+\epsilon}}} &\lesssim \||\Lambda^{\half}\phi_0|^2 \|_{H^{-1,\frac{2}{1+\epsilon}}} \\
& \lesssim \||\Lambda^{\half}\phi_0|^2\|_{L^1} \lesssim \|\Lambda^{\half} \phi_0\|_{L^2}^2 \lesssim \|\phi_0\|_{H^s}^2 \, .
\end{align*}
Moreover, similarly as in the case $n \ge 3$ the assumption (\ref{8}) implies
$$ D^{-\epsilon} \partial_t F_{{jk}_{|t=0}} = D^{-\epsilon} \partial_j F_{{0k}_{|t=0}} - D^{-\epsilon} \partial_k F_{{0j}_{|t=0}} \in H^{s-2+\epsilon} \, . $$
Thus we have also shown (\ref{4.14}). The proof of Theorem \ref{Theorem1} is now complete.


\begin{thebibliography}{99}
\bibitem[1]{AFS} P. d'Ancona, D. Foschi, and S. Selberg: {\sl Null structure 
and almost optimal local regularity for the Dirac-Klein-Gordon system}. 
Journal of the EMS 9 (2007), 877-898
\bibitem[2]{AFS1} P. d'Ancona, D. Foschi, and S. Selberg: {\sl Null structure 
and 
almost optimal local regularity of the Maxwell-Dirac system}. 
American J. Math. 132 (2010), 771-839
\bibitem[3]{AFS2} P. d'Ancona, D. Foschi, and S. Selberg: {\sl Product estimates for wave-Sobolev spaces in 2+1 and 1+1 dimensions}. 
Contemporary Math. 526 (2010), 125-150
\bibitem[4]{AFS3} P. d'Ancona, D. Foschi, and S. Selberg: {\sl Atlas of products for wave-Sobolev spaces in $\mathbb {R}^{1+3}$}. Transactions AMS 364 (2012), 31-63
\bibitem[5]{C} S. Cuccagna: {\sl On the local existence for the Maxwell-Klein-Gordon system in $\mathbb{R}^{3+1}$}. Comm. PDE 24 (1999), 851-867
\bibitem[6]{CP} M. Czubak and N. Pikula: {\sl Low regularity well-posedness for the 2D Maxwell-Klein-Gordon equation in the Coulomb gauge}. Comm. Pure Appl. Anal. 13 (2014), 1669-1683
\bibitem[7]{EM} D.M. Eardley and V. Moncrief: {\sl The global existence of Yang-Mills-Higgs fields in 4-dimensional Minkowski space, I and II}. Comm. Math. Phys. 83 (1982), 171-191, 193-212
\bibitem[8]{GV} J. Ginibre and G. Velo: {\sl Generalized Strichartz 
inequalities for the wave equation}. J. Funct. Analysis 133 (1995), 50-68
\bibitem[9]{KRT} M. Keel, T. Roy and T. Tao: {\sl Global well-posedness of the Maxwell-Klein-Gordon equation below the energy norm}. Discrete Cont. Dyn. Syst. 30 (2011), 573-621
\bibitem[10]{KM} S. Klainerman and M. Machedon: {\sl On the Maxwell-Klein-Gordon equation with finite energy}. Duke Math. J. 74 (1994), 19-44
\bibitem[11]{KMBT} S. Klainerman and M. Machedon (Appendices by J. Bourgain and D. Tataru): {\sl Remark on Strichartz type inequalities}. Int. Math. Res. Not. no. 5 (1996), 201-220
\bibitem[12]{KS} S. Klainerman and S. Selberg: {\sl Bilinear estimates and applications to nonlinear problems}. Comm. Contemp. Math. 4 (2002), 223-295
\bibitem[13]{KT} S. Klainerman and D. Tataru: {\sl On the optial local regularity for the Yang-Mills equations in ${\mathbb R}^{4+1}$}. Journal AMS 12 (1999), 93-116
\bibitem[14]{KST} J. Krieger, J. Sterbenz and D. Tataru: {\sl Global well-posedness for the Maxwell-Klein-Gordon equation in 4+1 dimensions. Small energy}. Preprint arXiv: 1211.2527
\bibitem[15]{MS} M. Machedon and J. Sterbenz: {\sl Almost optimal local well-posedness for the (3+1)-dimensional Maxwell-Klein-Gordon equations}. J. AMS 17 (2004), 297-359
\bibitem[16]{M} V. Moncrief: {\sl Global existence of Maxwell-Klein-Gordon fields in (2+1)-dimensional spacetime}. J. Math. Phys. 21 (1980), 2291-2296
\bibitem[17]{RT} I. Rodnianski and T. Tao: {\sl Global regularity for the Maxwell-Klein-Gordon equation with small critical Sobolev norm in high dimensions}. Comm. Math. Phys. 251 (2004), 377-426 
\bibitem[18]{Se} S. Selberg: {On an estimate for the wave equation and applications to nonlinear problems}.  Diff. Int. Equ. 2 (2002), 213-236
 \bibitem[19]{S} S. Selberg: {\sl Almost optimal local well-posedness of the Maxwell-Klein-Gordon equations in 1+4 dimensions}. Comm. PDE 27 (2002), 1183-1227
 \bibitem[20]{ST} S. Selberg and A. Tesfahun: {\sl Finite-energy global well-posedness of the Maxwell-Klein-Gordon system in Lorenz gauge}. Comm. PDE 35 (2010), 1029-105
\bibitem[21]{T1} T. Tao: {\sl Multilinear weighted convolutions of $L^2$-functions, and applications to non-linear dispersive equations}. Amer. J. Math. 123 (2001), 839-908
\bibitem[22]{T} T. Tao: {\sl Local well-posedness of the Yang-Mills equation in the temporal gauge below the energy norm}. J. Diff. Equ. 189 (2003), 366-382
\end{thebibliography}
\end{document}